\definecolor{winered}{rgb}{0.7,0,0}
\definecolor{lessblue}{rgb}{0,0,0.7}
\numberwithin{equation}{section}
\newtheorem{thm}{Theorem}[section]
\newtheorem{prop}[thm]{Proposition}
\newtheorem{lemma}[thm]{Lemma}
\newtheorem{cor}[thm]{Corollary}
\newtheorem*{thm*}{Theorem}
\newtheorem*{prop*}{Proposition}
\newtheorem*{cor*}{Corollary}
\newtheorem*{conj*}{Conjecture}
\theoremstyle{definition}
\newtheorem{definition}[thm]{Definition}
\theoremstyle{remark}
\newtheorem{rmk}[thm]{Remark}
\newcommand{\mc}{\mathcal}
\newcommand{\cC}{\mc C}
\newcommand{\cH}{\mc H}
\newcommand{\cL}{\mc L}
\newcommand{\cM}{\mc M}
\newcommand{\cO}{\mc O}
\newcommand{\cP}{\mc P}
\newcommand{\cQ}{\mc Q}
\newcommand{\cR}{\mc R}
\newcommand{\cU}{\mc U}
\newcommand{\cV}{\mc V}
\newcommand{\cW}{\mc W}
\newcommand{\cX}{\mc X}
\newcommand{\cY}{\mc Y}
\newcommand{\cCH}{\cC\cH}
\newcommand{\ms}{\mathscr}
\newcommand{\sS}{\ms S}
\newcommand{\C}{\mathbb{C}}
\newcommand{\N}{\mathbb{N}}
\newcommand{\R}{\mathbb{R}}
\newcommand{\Z}{\mathbb{Z}}
\newcommand{\Sph}{\mathbb{S}}
\newcommand{\sfp}{\mathsf{p}}
\newcommand{\ran}{\operatorname{ran}}
\renewcommand{\Re}{\operatorname{Re}}
\renewcommand{\Im}{\operatorname{Im}}
\newcommand{\supp}{\operatorname{supp}}
\newcommand{\sgn}{\operatorname{sgn}}
\newcommand{\tr}{\operatorname{tr}}
\newcommand{\rank}{\operatorname{rank}}
\newcommand{\la}{\langle}
\newcommand{\ra}{\rangle}
\newcommand{\pa}{\partial}
\newcommand{\tn}{\textnormal}
\newcommand{\eps}{\epsilon}
\newcommand{\wt}{\widetilde}
\newcommand{\wh}{\widehat}
\newcommand{\ol}{\overline}
\newcommand{\hra}{\hookrightarrow}
\newcommand{\bop}{{\mathrm{b}}}
\newcommand{\cp}{{\mathrm{c}}}
\newcommand{\psdo}{ps.d.o.}
\newcommand{\Diff}{\mathrm{Diff}}
\newcommand{\Diffb}{\Diff_\bop}
\newcommand{\Psib}{\Psi_\bop}
\newcommand{\WF}{\mathrm{WF}}
\newcommand{\WFb}{\WF_{\bop}}
\newcommand{\Tb}{{}^{\bop}T}
\newcommand{\rcTb}{{}^{\bop}\overline{T}}
\newcommand{\Sb}{{}^{\bop}S}
\newcommand{\Nb}{{}^{\bop}N}
\newcommand{\half}{\frac{1}{2}}
\newcommand{\ham}{H}
\newcommand{\rham}{{\mathsf{H}}}
\newcommand{\bhm}{M_\bullet}
\newcommand{\loc}{{\mathrm{loc}}}
\newcommand{\CI}{\cC^\infty}
\newcommand{\CIc}{\cC^\infty_\cp}
\newcommand{\CmI}{\cC^{-\infty}}
\newcommand{\Hloc}{H_{\loc}}
\newcommand{\Hb}{H_{\bop}}
\newcommand{\Hbloc}{H_{\bop,\loc}}
\newcommand{\Rnhalfc}{{\overline{\R^n_+}}}
\newcommand{\openbigpmatrix}[1]{\addtolength{\arraycolsep}{-#1}\begin{pmatrix}}
\newcommand{\closebigpmatrix}[1]{\end{pmatrix}\addtolength{\arraycolsep}{#1}}
\newcommand{\itref}[1]{(\ref{#1})}
\newlength{\enummargin}
\begin{document}

\title[Decay at the Cauchy horizon of Kerr]{Boundedness and decay of scalar waves at the Cauchy horizon of the Kerr spacetime}
\author{Peter Hintz}

\address{Department of Mathematics, University of California, Berkeley, CA 94720-3840, USA}
\email{phintz@berkeley.edu}

\subjclass[2010]{Primary 58J47, Secondary 35L05, 35P25, 83C57}

\date{December 25, 2015. Last revision: February 28, 2017.}

\begin{abstract}
  Adapting and extending the techniques developed in recent work with Vasy for the study of the Cauchy horizon of cosmological spacetimes, we obtain boundedness, regularity and decay of linear scalar waves on subextremal Reissner--Nordstr\"om and (slowly rotating) Kerr spacetimes, without any symmetry assumptions; in particular, we provide simple microlocal and scattering theoretic proofs of analogous results by Franzen. We show polynomial decay of linear waves relative to a Sobolev space of order slightly above $1/2$. This complements the generic $\Hloc^1$ blow-up result of Luk and Oh.
\end{abstract}

\maketitle

\section{Introduction}
\label{SecIntro}

We analyze regularity and decay of linear scalar waves near the Cauchy horizon of asymptotically flat black hole spacetimes by adapting and extending microlocal and scattering theoretic arguments used in recent work with Vasy \cite{HintzVasyCauchyHorizon} on cosmological black hole spacetimes; in particular, this provides new and independent proofs of boundedness and $\cC^0$ extendability results obtained by Franzen \cite{FranzenRNBoundedness}, see also the work in progress \cite{FranzenKerrBoundedness} and the discussion below. The spacetimes we consider are Reissner--Nordstr\"om spacetimes, i.e.\ non-rotating black holes, with non-zero charge, and Kerr spacetimes, i.e.\ rotating black holes, with non-zero angular momentum; see Figure~\ref{FigIntroPenrose} for their Penrose diagrams. Near the region we are interested in, these spacetimes are Lorentzian 4-manifolds with the topology $\R_{t_0}\times(0,\infty)_r\times\Sph^2_\omega$, equipped with a Lorentzian metric $g$ of signature $(1,3)$. They have two horizons, namely the \emph{Cauchy horizon} $\cCH^+$ at $r=r_1$ and the \emph{event horizon} $\cH^+$ at $r=r_2>r_1$. In order to quantify decay rates, we use a time function $t_0$, which is equivalent to the Boyer--Lindquist coordinate $t$ away from the event and Cauchy horizons, i.e.\ $t_0$ differs from $t$ by a smooth function of the radial coordinate $r$; and $t_0$ is equivalent to the Eddington--Finkelstein coordinate $u$ near the Cauchy horizon, and to the Eddington--Finkelstein coordinate $v$ near the event horizon. (Since we are interested in the part of the spacetime close to $\cCH^+$, the choice of $t_0$ for large $r$ is irrelevant.) We consider the Cauchy problem for the linear wave equation with Cauchy data posed on a surface $H_I$ as indicated in Figure~\ref{FigIntroPenrose}.

\begin{figure}[!ht]
  \centering
  \includegraphics{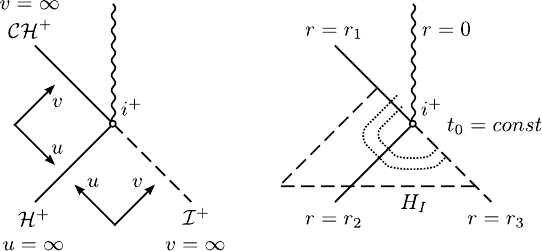}
  \caption{\textit{Left:} Penrose diagram of the Reissner--Nordstr\"om spacetime, and of an $\omega=const$ slice of a Kerr spacetime with angular momentum $a\neq 0$; shown are the Cauchy horizon $\cCH^+$ and the event horizon $\cH^+$, as well as future timelike infinity $i^+$. The coordinates $u,v$ are Eddington--Finkelstein coordinates. \textit{Right:} The same Penrose diagram. The region enclosed by the dashed lines is the domain of dependence of the Cauchy surface $H_I$. The dotted lines are two level sets of the function $t_0$; the smaller one of these corresponds to a larger value of $t_0$.}
\label{FigIntroPenrose}
\end{figure}

In its simplest form, the main result of the paper states:

\begin{thm}
\label{ThmIntroMain}
  Let $g$ be a subextremal Reissner--Nordstr\"om or slowly rotating Kerr metric with non-zero angular momentum. Suppose $u$ solves the Cauchy problem for the wave equation $\Box_g u=0$ with smooth compactly supported initial data. Then for all $\eps>0$, there exists a constant $C>0$ such that
  \begin{equation}
  \label{EqIntroMainBound}
    |u| \leq C t_0^{-2+\eps}
  \end{equation}
  uniformly in $r>r_1$.
  
  More precisely, near $\cCH^+$, the solution $u(t_0,r,\omega)$ is smooth in $(t_0,\omega)$, with values in the Sobolev space $H^{1/2+(1/2+\eps')\log}((r_1,r_1+\delta))$, and norm bounded by $C t_0^{-2+\eps}$, for $\eps,\delta>0$ and sufficiently small $\eps'>0$.
\end{thm}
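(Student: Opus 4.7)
The plan is to adapt the b-calculus / microlocal framework of \cite{HintzVasyCauchyHorizon} to the asymptotically flat setting. First, I would compactify the interval $t_0\in[0,\infty)$ by adjoining a b-type boundary face at $t_0=\infty$; on the resulting b-manifold, $\Box_g$ is a b-differential operator whose principal part is dilation-invariant in $t_0$ to leading order. A Mellin transform in $t_0$ then produces a holomorphic family $\wh L(\sigma)$ of operators on a spatial slice extending from past the Cauchy horizon $r=r_1$ out to spatial infinity. Both horizons are Killing horizons with nonzero surface gravities $\kappa_1,\kappa_2$, so over them there are radial point sets in $\Tb^*$ (source/sink type at $\cCH^+$, the opposite at $\cH^+$). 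The global strategy has three parts: prove meromorphic Fredholm theory for $\wh L(\sigma)^{-1}$ in a strip $\Im\sigma>-2+\eps$; locate the resonances; contour-shift the inverse Mellin representation of $u$ to read off both decay and regularity.

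For the Fredholm theory, I would assemble on a single variable-order b-Sobolev space the following estimates: (i) a below-threshold radial point estimate at $r=r_2$ consistent with the forward problem; (ii) an above-threshold radial point estimate at $r=r_1$ with threshold $\tfrac12+(-\Im\sigma)/\kappa_1$ — this threshold is sharp and encodes the blue-shift at $\cCH^+$, and is precisely what limits the achievable regularity to slightly above $H^{1/2}$; (iii) for Kerr, a Wunsch-Zworski / Dyatlov type semiclassical estimate for the normally hyperbolic trapping at the photon sphere, which admits only a logarithmic loss and forces the small $\eps>0$ loss in the decay exponent; (iv) scattering-calculus estimates as $r\to\infty$. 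Combining (i)-(iv) yields Fredholm theory and then invertibility of $\wh L(\sigma)$ off a discrete resonance set.

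To locate resonances in $\Im\sigma>-2+\eps$, I would invoke mode stability: direct ODE analysis for subextremal Reissner-Nordstr\"om, and a perturbative argument from Schwarzschild for slowly rotating Kerr (which also underlies the perturbative trapping analysis). Together these place the only resonance in the closed upper half plane at $\sigma=0$, where for compactly supported data the contribution is harmless (bounded and absorbed into the claim). Contour-shifting the inverse Mellin integral to $\Im\sigma=-2+\eps$ then yields $|u|\le C t_0^{-2+\eps}$; the $r$-regularity at $r=r_1$ is read off from the variable-order radial point threshold at the shifted contour, $(-\Im\sigma)/\kappa_1=(2-\eps)/\kappa_1$, yielding $H^{1/2+\eps'}$ together with the borderline logarithmic weight that tracks the proximity of the contour to the resonance spacing.

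The principal obstacle is constructing these resolvent estimates \emph{uniformly in} $\sigma$ across the full strip, simultaneously reconciling the Killing-horizon radial points at $r=r_1,r_2$, the normally hyperbolic trapping (the sole source of the loss from $-2$ to $-2+\eps$), and the scattering-type behavior at $r\to\infty$ — all on one variable-order b-Sobolev space with weights chosen to be compatible at every elliptic, hyperbolic, radial, and trapped point. The specialization to slowly rotating Kerr is precisely what makes both the trapping and the mode-stability step perturbative; given the cosmological analog is already handled in \cite{HintzVasyCauchyHorizon}, the principal new technical work lies in the scattering end and the passage from exponential to polynomial decay rates.
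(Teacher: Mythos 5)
There is a genuine gap, and it sits precisely at the point your plan treats as routine. You propose to establish a meromorphic Fredholm theory for $\wh L(\sigma)^{-1}$ in the strip $\Im\sigma>-2+\eps$, locate the resonances, and then shift the inverse Mellin contour down to $\Im\sigma=-2+\eps$ to read off decay. On an asymptotically flat black hole this does not work: the cut locus at $\sigma=0$ is a branch point (essential spectrum), not an isolated resonance, and the resolvent does \emph{not} continue meromorphically past the real axis on a strip of fixed width. Polynomial decay in this setting is governed by the precise low-frequency expansion of $\wh\Box_g(\sigma)^{-1}$ as $\sigma\to 0$ (this is exactly how Tataru proves Price's law), not by a resonance-free strip. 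The paper sidesteps this by (a) treating Price's law as an input -- the theorem is really ``decay in the exterior implies decay at $\cCH^+$'' -- and (b) cutting off the solution at a finite radius and gluing in a cosmological horizon, replacing the asymptotically flat end by an asymptotically hyperbolic one, so that the modified $\wh\cP(\sigma)^{-1}$ \emph{is} meromorphic. Even then, the contour is only shifted from slightly above the real line (where the extended solution is a priori exponentially bounded) down to the real line, not below it; the polynomial weight in the conclusion is carried by the polynomial weight of the forcing, via a Paley--Wiener-type characterization on the line $\Im\sigma=0$.

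Two further points. First, you attribute the $\eps$ loss to normally hyperbolic trapping, but on the modified spacetime the high-energy estimates near the trapping are essentially lossless in this context; the $\eps$ in the decay rate comes from converting $L^\infty$ to $L^2$ decay and from the logarithmic regularity trade-off at the Cauchy horizon. Second, and more importantly, the regularity conclusion $H^{1/2+(1/2+\eps')\log}$ does not fall out of choosing the contour close to a resonance. The below-threshold radial point estimate at $\cCH^+$ only gives $H^{1/2-\delta}$, which fails to embed into $L^\infty$; the technical heart of the paper is a \emph{logarithmically improved} radial point estimate at the exact threshold $s=1/2$, which gains $\log$-derivatives at the cost of $\log$-weights in $\tau=e^{-t_*}$, and this requires a new commutator argument (the commutant $\log^{\ell+1/2}(\wh\rho^{-\beta}\tau^{-2})$ in Theorem~\ref{ThmRadialLogOut}). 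You would also need the Carter operator (hidden symmetry) on Kerr to upgrade the radial Sobolev regularity to full smoothness in $t_0,\omega$; spherical symmetry alone will not do it there.
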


Here, the Sobolev space $H^{s+\ell\log}(\R)$ consists of all $u$ for which $\la\xi\ra^s\la\log\la\xi\ra\ra^\ell\hat u(\xi)\in L^2$, where $\la\xi\ra=(1+|\xi|^2)^{1/2}$, and \eqref{EqIntroMainBound} follows from the second part of the Theorem via the Sobolev embedding $H^{1/2+(1/2+\eps'\log)}(\R)\hra L^\infty(\R)$ for $\eps'>0$.

We shall in fact prove that a bound $t_0^{-\alpha}$, $\alpha>3/2$, for $u$ and its derivatives in the exterior region implies a uniform bound for $u$, and its derivatives tangential to the Cauchy horizon, by $t_0^{-\alpha+3/2+\eps}$ near the Cauchy horizon; and if $D_{t_0}u$ and its derivatives are bounded by $t_0^{-\alpha-1}$, we obtain the stronger bound $t_0^{-\alpha+1+\eps}$ near $\cCH^+$; see Theorem~\ref{ThmPfMain}. The bound \eqref{EqIntroMainBound} is then a consequence of Price's law for scalar waves, which gives $\alpha=3$; Price's law has been proved rigorously for all subextremal Reissner--Nordstr\"om spacetimes and slowly rotating Kerr spacetimes by Tataru \cite{TataruDecayAsympFlat} and Metcalfe, Tataru, and Tohaneanu \cite{MetcalfeTataruTohaneanuPriceNonstationary}. These are in fact black box results, hence the uniform boundedness of energy and the integrated local energy decay estimates proved by Dafermos, Rodnianski, and Shlapentokh-Rothman \cite{DafermosRodnianskiShlapentokhRothmanDecay} imply Price's law for the full subextremal Kerr family. We also mention the work of Dafermos and Rodnianski \cite{DafermosRodnianskiPrice} for a proof of Price's law in a nonlinear, but spherically symmetric setting. In order to make our arguments as simple as possible, we assume the angular momentum to be small in a technical step in \S\ref{SubsecPfAsy}; however, a slight improvement of the argument will likely eliminate this restriction, see Remark~\ref{RmkPfAsySmallA}.

Our arguments also apply to the initial value problem with smooth, compactly supported Cauchy data posed on a \emph{two-ended} hypersurface: In this case, we obtain uniform regularity up to the bifurcate Cauchy horizon and the bifurcation sphere. (See e.g.\ \cite[Figure~1]{LukOhReissnerNordstrom}.)

As a consequence of our main theorem, we recover Franzen's result on the boundedness and $\cC^0$ extendibility of linear waves on Reissner--Nordstr\"om spacetimes, and are moreover able to extend it to the case of slowly rotating Kerr spacetimes:

\begin{thm}[Franzen \cite{FranzenRNBoundedness}, and ongoing work \cite{FranzenKerrBoundedness}]
\label{ThmIntroBoundedness}
  Under the assumptions of Theorem~\ref{ThmIntroMain}, $u$ remains uniformly bounded in the black hole interior, and moreover $u$ extends by continuously to $\cCH^+$.
\end{thm}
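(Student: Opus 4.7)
The plan is to deduce Theorem~\ref{ThmIntroBoundedness} directly from the more refined statement of Theorem~\ref{ThmIntroMain}. The pointwise bound \eqref{EqIntroMainBound} is uniform in $r>r_1$, so uniform boundedness throughout the black hole interior is immediate. Moreover, because $t_0\to\infty$ both at future timelike infinity $i^+$ and (in the two-ended formulation) towards the bifurcation sphere of $\cCH^+$, the same bound forces any continuous extension of $u$ to the closure of the interior to vanish on these degenerate strata.

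For continuous extendibility across $\cCH^+$ itself, I would invoke the second, quantitative part of Theorem~\ref{ThmIntroMain}: near the Cauchy horizon, $u(t_0,r,\omega)$ is smooth in $(t_0,\omega)$ with values in $H^{1/2+(1/2+\eps')\log}((r_1,r_1+\delta))$ for some $\eps'>0$. The relevant borderline Sobolev embedding is
\[
  H^{1/2+\ell\log}(\R)\hra \cC^0(\R)\cap L^\infty(\R),\quad \ell>1/2,
\]
which follows from Cauchy--Schwarz applied to the Fourier transform: for $\ell>1/2$ the weight $\la\xi\ra^{-1}\la\log\la\xi\ra\ra^{-2\ell}$ is integrable on $\R$, so $\hat u\in L^1$ whenever $u\in H^{1/2+\ell\log}$, and continuity follows from the Riemann--Lebesgue lemma. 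Applied with $\ell=1/2+\eps'$, this shows that for each fixed $(t_0,\omega)$, the function $r\mapsto u(t_0,r,\omega)$ extends continuously to the closed interval $[r_1,r_1+\delta]$.

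To promote this to joint continuity on the closed region including $\cCH^+$, I would view $u$ as a smooth function of $(t_0,\omega)$ taking values in the Banach space $\cC^0([r_1,r_1+\delta])$ supplied by the above embedding; the evaluation map $(t_0,r,\omega)\mapsto u(t_0,r,\omega)$ is then jointly continuous up to the Cauchy horizon. Combined with the decay recorded in the first paragraph, this yields a continuous extension of $u$ to all of $\cCH^+$, vanishing at $i^+$ and at the bifurcation sphere. There is no substantive obstacle here: the embedding step is elementary, and all the real work has been carried out in Theorem~\ref{ThmIntroMain}, so Theorem~\ref{ThmIntroBoundedness} amounts to extracting pointwise information from that microlocal regularity statement.
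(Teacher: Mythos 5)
Your proposal is correct and matches the paper's own (largely implicit) derivation: the paper obtains continuity and the $L^\infty$ bound from the same embedding $H^{1/2+(1/2+\eps')\log}(\R)\hra L^\infty(\R)$ together with smoothness in $(t_*,\omega)$ coming from the symmetries, and the ``$u$ extends continuously to $\cCH^+$'' clause in Theorem~\ref{ThmPfMain} is exactly this observation. Your care in upgrading to joint continuity via $\cC^0([r_1,r_1+\delta])$-valued smooth dependence on $(t_0,\omega)$ makes explicit a step the paper leaves implicit; the extra remark about vanishing at $i^+$ and the bifurcation sphere is a true but unneeded bonus.
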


We point out however that Franzen's results apply under assumptions on the decay rate in the exterior region which are less restrictive than what we need for our arguments; and the techniques used (based on vector field multipliers) are rather different than the ones used in the present paper. We also remark that one can directly infer some (weak) decay along the Cauchy horizon from her work. For the nonlinear Einstein vacuum equations, Dafermos and Luk recently announced the $\cC^0$ stability of the Cauchy horizon for perturbations of Kerr spacetimes, assuming suitable decay rates (compatible with Price's law) of the perturbation to a Kerr spacetime in the exterior region \cite{DafermosICM2014}.

On the other hand, Luk and Oh prove the following blow-up result:\footnote{Since the appearance of the first version of the present paper, Luk and Sbierski \cite{LukSbierskiKerr} have proved a corresponding blow-up result on Kerr spacetimes, subject to an assumption on a lower bound along the event horizon. See also the related work by Dafermos and Shlapentokh-Rothman \cite{DafermosShlapentokhRothmanBlueShift}.}

\begin{thm}[Luk-Oh \cite{LukOhReissnerNordstrom}]
\label{ThmIntroBlowUp}
  The solution to the Cauchy problem $\Box_g u=0$ fails to be in $\Hloc^1$ near any point on the Cauchy horizon for generic $\CI$ initial data with compact support.
\end{thm}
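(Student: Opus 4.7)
The plan is to combine a generic pointwise lower bound for $u$ along the event horizon $\cH^+$ with a blueshift instability argument in the black hole interior. The first ingredient is an \emph{inverse} Price's law on $\cH^+$: for generic smooth compactly supported Cauchy data, I would show that the restriction of $u$ (or its derivative $\pa_v u$) to $\cH^+$ satisfies a pointwise lower bound of the form $|u|_{\cH^+}(v,\omega)|\gtrsim v^{-3}$ along the Eddington--Finkelstein coordinate $v\to\infty$, with the implicit constant controlled by a Newman--Penrose type conserved quantity at null infinity. Since this quantity is a continuous linear functional of the data, its vanishing locus is a closed proper subspace of $\CIc$, so genericity is immediate.

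Next, I would analyse the wave equation in the interior in double-null Eddington--Finkelstein coordinates $(u,v)$, with $v\to\infty$ parametrising $\cCH^+$. While the positive surface gravity $\kappa_+$ at $\cH^+$ damps outgoing radiation, the non-zero surface gravity $\kappa_-$ at $\cCH^+$ generates a blueshift: propagating $\pa_v u$ along the null generators of the interior via the transport form of $\Box_g$ and transporting the lower bound from the first step across the domain of dependence, one expects an amplification factor of order $e^{|\kappa_-|v}$ multiplying the $v^{-3}$ tail inherited from $\cH^+$. Consequently the energy flux $\int |\pa_v u|^2$ on interior null hypersurfaces approaching $\cCH^+$ diverges, giving the required integrated contradiction to $\Hloc^1$-boundedness.

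To convert this integrated blow-up into the stated \emph{pointwise} statement at any point $p\in\cCH^+$, I would invoke the tangential regularity provided by Theorem~\ref{ThmIntroMain} (which gives smoothness of $u$ in $(t_0,\omega)$ near $\cCH^+$), together with the fact that the stationary Killing generator acts on $\cCH^+$ by $v$-translation. If $u$ were in $\Hloc^1$ near some $p$, then by tangential translation $\pa_v u$ would be in $L^2$ on a full neighbourhood of a generator of $\cCH^+$ through $p$, contradicting the divergence derived above. The main obstacle is the \emph{inverse} Price's law on $\cH^+$: while the upper bound $v^{-3}$ is known from \cite{TataruDecayAsympFlat,MetcalfeTataruTohaneanuPriceNonstationary}, showing it is saturated generically requires identifying the leading tail coefficient with a data-dependent conserved quantity and verifying its generic non-vanishing; and propagating the lower bound across the interior without cancellation from higher angular modes or destructive interference with transverse derivatives is delicate, which is what forces the mode-by-mode analysis underlying the actual Luk--Oh argument.
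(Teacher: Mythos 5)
This theorem is not proved in the paper: it is Luk and Oh's result \cite{LukOhReissnerNordstrom}, quoted only to contrast with Theorem~\ref{ThmIntroMain}, and the paper offers no argument for it. There is therefore no internal proof to compare against.

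Judged against the Luk--Oh strategy, your sketch has the right overall shape (a generic late-time lower bound on $\cH^+$, blueshift amplification in the black hole interior, and consequent failure of $\Hloc^1$ at $\cCH^+$), but two steps need to be corrected or substantially sharpened. First, the genericity mechanism is not what you describe: for smooth \emph{compactly supported} Cauchy data the Newman--Penrose constant at null infinity vanishes identically, so the coefficient of the $v^{-3}$ tail on $\cH^+$ is a subtler, effectively time-integrated quantity, and its generic non-vanishing is a Price's-law \emph{lower-bound} theorem in its own right rather than a soft linear-functional argument. Second, the factor $e^{\kappa_- v}$ you anticipate is merely the Jacobian of the change to a coordinate regular at $\cCH^+$; the substantive content is propagating the polynomial \emph{lower} bound from $\cH^+$ through the interior to a lower bound on a transversal derivative near $\cCH^+$, without cancellation among angular modes or against transverse terms, after which the exponential coordinate stretching makes $\pa_V u$ fail to be square-integrable. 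That propagation of a lower bound is the technical core of the Luk--Oh paper, and a crude transport estimate does not supply it. You acknowledge both points at the end, which is fair, but as it stands the proposal is a roadmap rather than a proof; it also relies on Theorem~\ref{ThmIntroMain} of the present paper for the tangential regularity step, whereas the Luk--Oh argument is logically independent of it and handles the angular behavior by its own estimates.
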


Our main result therefore provides a rather precise boundedness and regularity estimates, complementing this blow-up statement. We expect that the loss in the decay rate, mainly caused by the conversion between $L^2$ and $L^\infty$ decay rates, can be eliminated by using more precise, $L^\infty$-type Besov spaces.

In sharp contrast to Theorem~\ref{ThmIntroBlowUp}, Gajic \cite{GajicExtremalRN}, following work of Aretakis \cite{AretakisExtremalRN1,AretakisExtremalRN2} and using the upcoming \cite{AngelopoulosAretakisGajicExtremalRN}, has recently shown in the case of \emph{extremal} Reissner--Nordstr\"om spacetimes that $u$ extends in $\Hloc^1$ past $\cCH^+$; we will however not discuss extremal black holes in the present paper.

\subsection{Sketch of the proof}
\label{SubsecIntroSketch}

The presence of an asymptotically flat spatial infinity, rather than an asymptotically hyperbolic one as in cosmological spacetimes (i.e.\ in the presence of a cosmological constant $\Lambda>0$), causes difficulties in the analysis of the stationary operator $\wh\Box_g(\sigma):=e^{it_0\sigma}\Box_g e^{-it_0\sigma}$ near the real axis, and specifically near $\sigma=0$. (In fact, it is a precise analysis of the behavior of $\wh\Box_g(\sigma)^{-1}$ as $\Im\sigma\to 0+$ which is key to Tataru's proof of Price's law \cite{TataruDecayAsympFlat}.) However, assuming suitable decay in the exterior region, we can circumvent these difficulties. In the notation of Theorem~\ref{ThmIntroMain}, we thus proceed as follows:

\medskip

\textit{Step 1.} (See \S\ref{SubsecPfLoc}.) We cut the solution $u$ off at a large radius; the cut-off wave $u_\chi$ solves $\Box_g u_\chi=f_\chi$, where $f_\chi\in t_0^{-3}L^\infty$ has the same decay properties as the original $u$. Translating this into $L^2$ spacetime decay gives $u_\chi\in t_0^{-5/2+\eps}L^2$.

\medskip

\textit{Step 2.1.} (See \S\S\ref{SubsecPfMod} and \ref{SubsecPfSol}.) We are now free to modify the spacetime near spatial infinity by adding a cosmological horizon, which eliminates the aforementioned low frequency problems.

\medskip

We want to use methods of microlocal analysis near the Cauchy horizon, using the saddle point structure of the null-geodesic flow in a suitable uniform version of phase space, called b-cotangent bundle, on the compactification of the spacetime at $t_0=\infty$, in the spirit of \cite[\S2]{HintzVasySemilinear}; see Figures~\ref{FigIntroExt} and \ref{FigPfSolFlow} below. It is then technically very convenient to extend the spacetime and the wave equation under consideration beyond $\cCH^+$ so that the Cauchy horizon itself lies in the interior of the domain on which we solve the thus extended equation.

\medskip

\textit{Step 2.2} We modify the spacetime beyond $\cCH^+$ by adding an `artificial exterior region,' including an artificial horizon $\ol\cH^a$; denote the metric of the extended spacetime by $\wt g$. See Figure~\ref{FigIntroExt} for a Penrose diagram. (For the purpose of hiding the possibly complicated structure of the extension, we place a complex absorbing operator $\cQ\in\Psi^2$ behind $\cCH^+$ in the spirit of \cite{NonnenmacherZworskiQuantumDecay,WunschZworskiNormHypResolvent}. We will drop $\cQ$ in this brief sketch.) We can then solve the Cauchy problem for the equation $\Box_{\wt g}\wt u=f_\chi$, where we allow $\wt u$ a priori to have exponential growth. By uniqueness for the wave equation, we have $\wt u=u_\chi$ in $r>r_1$; the goal now is to combine this information with precise regularity estimates for $\wt u$ at the Cauchy horizon to obtain uniform bounds and decay for $\wt u$, and thus for $u_\chi$.

\begin{figure}[!ht]
  \centering
  \includegraphics{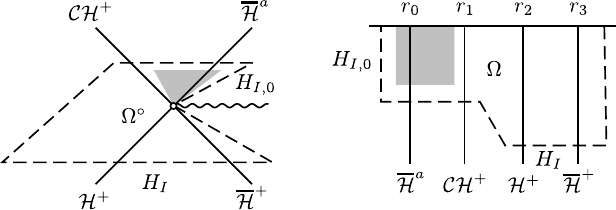}
  \caption{\textit{Left:} Penrose diagram of the extended spacetime. We add a cosmological horizon $\ol\cH^+$ at $r=r_0$ as well as an artificial horizon $\ol\cH^a$ at $r=r_0$, creating an artificial exterior region $r_0<r<r_1$. We place complex absorption in the shaded region. In order to have uniqueness for the wave equation in the domain $\Omega^\circ$, we place Cauchy data at $H_I\cup H_{I,0}$. \textit{Right:} The compactification of the spacetime, and of the domain $\Omega^\circ$, at $t_0=\infty$.}
\label{FigIntroExt}
\end{figure}

\medskip

This strategy was recently implemented by Vasy and the author \cite{HintzVasyCauchyHorizon} in the cosmological setting, i.e.\ for charged and/or rotating black holes in a de Sitter universe, in order to obtain asymptotic expansions, decay and regularity results for waves near the Cauchy horizon of cosmological spacetimes.

\medskip

\textit{Step 3.} (See \S\ref{SubsecPfAsy}.) One shows next that $\wt u$ in fact lies in $t_0^{-5/2+\eps}H^N$ near the Cauchy horizon for some (negative) $N$; furthermore, one obtains $t_0^{-5/2+\eps}H^N$ for \emph{all} $N$ away from the Cauchy horizon. (We thus show using scattering theory how polynomial decay propagates from the exterior region into the black hole exterior in the `no-shift region' in the terminology of \cite{FranzenRNBoundedness}; this was previously observed by Luk \cite{LukSchwarzschild} for the Schwarzschild black hole.) Using previously known radial point (microlocal blue-shift) estimates \cite[Proposition~2.1]{HintzVasySemilinear}, one can take $N=1/2-\delta$, $\delta>0$, at $\cCH^+$, which however does not quite give an $L^\infty$ bound. (The only issue is regularity across the horizon; spherical and time derivatives of $\wt u$ remain in $H^{1/2-\delta}$ with the same decay rate.)

\medskip

\textit{Step 4.} (See \S\ref{SubsecPfRad}.) The regularity at $\cCH^+$ can be crucially improved using radial point estimates at the Cauchy horizon. The technical heart of the present paper is thus a logarithmic improvement of radial point estimates at the borderline regularity; see Theorem~\ref{ThmRadialLogOut}. (This estimate is conveniently formulated on the compactified spacetime, and makes use of Melrose's b-calculus \cite{MelroseAPS}, both conceptually and technically.) This allows us to obtain the regularity $H^{1/2+(1/2+\delta)\log}$ for $\wt u$ at $\cCH^+$ upon giving up an additional $t_0^{1+\delta}$ of decay. The Sobolev embedding of the space $H^{1/2+(1/2+\delta)\log}(\R)$ into $L^\infty(\R)$ thus yields the uniform bound for $u$ in $t_0^{-3/2+\eps}L^\infty$, $\eps>0$, since we have smoothness in $t_0$ and in the spherical variables. Assuming better decay for $D_{t_0}u$ in the exterior region, we can integrate the analogous result for $D_{t_0}u$ from infinity and obtain the sharper result $u\in t_0^{-2+\eps}L^\infty$.

\medskip

In order to obtain the regularity of $\wt u$ with respect to $D_{t_0}$ and spherical derivatives, we need to use the precise structure of the spacetime: Regularity with respect to $D_{t_0}$ follows from the stationarity of the metric, while regularity in the spherical variables uses spherical symmetry in the Reissner--Nordstr\"om case and hidden symmetries in the Kerr case. Namely, in the latter case, we make use of the Carter operator, introduced in \cite{CarterKillingTensor} and intimately related to the Carter tensor \cite{WalkerPenroseType22} and the Carter constant \cite{CarterGlobalKerr}, and used extensively in the proof of wave decay in the exterior region of slowly rotating Kerr spacetimes by Andersson and Blue \cite{AnderssonBlueHiddenKerr}. Since the existence of the Carter constant is directly tied to the completely integrable nature of the geodesic flow on Kerr, this part of the argument is rather inflexible; see however Remark~\ref{RmkRadialConormal} for further discussion.

\subsection{Motivation and previous work}
\label{SubsecIntroMot}

The study of linear scalar waves serves as a toy model for understanding the problem of determinism in Einstein's theory of general relativity: Charged Reissner--Nordstr\"om and rotating Kerr solutions extend analytically beyond the Cauchy horizon, and in fact there are many inequivalent smooth extensions; an observer on such a fixed background spacetime could cross $\cCH^+$ in finite time and enter a region of spacetime where the metric tensor is not uniquely determined, given complete knowledge of the initial data. Motivated by heuristic arguments of Simpson and Penrose \cite{SimpsonPenroseBlueShift}, Penrose formulated the Strong Cosmic Censorship conjecture, which asserts that maximally globally hyperbolic developments for the Einstein--Maxwell or Einstein vacuum equations (depending on whether one considers charged or uncharged solutions) with \emph{generic} initial data (and a complete initial surface, and/or under further conditions) are inextendible as suitably regular Lorentzian manifolds. In particular, the smooth extendability of the Reissner--Nordstr\"om and Kerr solutions past their Cauchy horizons is conjecturally an unstable phenomenon. We refer to works by Christodoulou \cite{ChristodoulouInstabililtyOfNakedSing}, Dafermos \cite{DafermosEinsteinMaxwellScalarStability,DafermosInterior,DafermosBlackHoleNoSingularities}, and Costa, Gir\~ao, Nat\'ario, and Silva \cite{CostaGiraoNatarioSilvaCauchy1,CostaGiraoNatarioSilvaCauchy2,CostaGiraoNatarioSilvaCauchy3} in the spherically symmetric setting for positive and negative results for various notions of regularity, and to work in progress by Dafermos and Luk \cite{DafermosICM2014} on the $C^0$ stability of the Kerr Cauchy horizon. For an overview of the history of this line of study, we refer the reader to the excellent introductions of \cite{DafermosBlackHoleNoSingularities,LukOhReissnerNordstrom}.

In the black hole interior, the linear scalar wave equation was studied by several authors \cite{FranzenRNBoundedness,GajicExtremalRN,LukOhReissnerNordstrom,SbierskiThesis} using vector field methods, and by Vasy and the author \cite{HintzVasyCauchyHorizon} in the cosmological setting using scattering theory and microlocal analysis, using the framework developed by Vasy \cite{VasyMicroKerrdS} and extended in \cite{BaskinVasyWunschRadMink,HintzVasySemilinear}; the insight that the methods of \cite{HintzVasyCauchyHorizon} could be improved to work in the asymptotically flat case is what led to the present paper.

The analysis of linear and non-linear waves in the exterior region of asymptotically flat spacetimes has a very rich history, see \cite{KayWaldSchwarzschild,BachelotSchwarzschildScattering,DafermosEinsteinMaxwellScalarStability,DafermosRodnianskiPrice,DafermosRodnianskiLectureNotes,MarzuolaMetcalfeTataruTohaneanuStrichartz,DonningerSchlagSofferPrice,TohaneanuKerrStrichartz,TataruDecayAsympFlat,SterbenzTataruMaxwellSchwarzschild,AnderssonBlueMaxwellKerr,DafermosRodnianskiShlapentokhRothmanDecay} and references therein. In the asymptotically hyperbolic case of cosmological spacetimes, methods of scattering theory have proven very useful in this context, see \cite{SaBarretoZworskiResonances,BonyHaefnerDecay,DyatlovQNM,DyatlovQNMExtended,WunschZworskiNormHypResolvent,VasyMicroKerrdS,MelroseSaBarretoVasySdS,HintzVasyKdsFormResonances} and references therein; the present work is inspired by the philosophy underlying these latter works.

\subsection*{Acknowledgments}
\label{SubsecIntroAck}

I wish to thank Anne Franzen, Jonathan Luk, Sung-Jin Oh, Andr\'as Vasy, and Maciej Zworski for very helpful discussions. I would also like to thank an anonymous referee for comments which improved the exposition of the  paper. I am grateful for the hospitality of the Erwin Schr\"odinger Institute in Vienna, where part of this work was carried out. I gratefully acknowledge support by the Miller Institute at the University of California, Berkeley.

\section{Setup and proof of the main theorem}
\label{SecPf}

We recall the form of the metric of the Reissner--Nordstr\"om--de Sitter family of black holes,
\begin{equation}
\label{EqPfRNdSMetric}
  g_{\bhm,Q,\Lambda} = \mu\,dt^2 - \mu^{-1}\,dr^2 - r^2\,d\omega^2,\quad \mu=1-\frac{2\bhm}{r}+\frac{Q^2}{r^2}-\frac{\Lambda r^2}{3},
\end{equation}
on $\R_t\times I_r\times\Sph^2$, $I\subset\R$ open, with $d\omega^2$ the round metric on $\Sph^2$. Here, $\bhm$ is the mass of the black hole, $Q$ its charge, and $\Lambda\geq 0$ is the cosmological constant. We are interested in the Reissner--Nordstr\"om family $g_{\bhm,Q}:=g_{\bhm,Q,0}$, which is called \emph{subextremal} for the parameter range $|Q|<\bhm$: In this case, $\mu(r)$ has two unique roots $0<r_1<r_2$, called \emph{Cauchy horizon} ($r=r_1$) and \emph{event horizon} ($r=r_2$). The metric, when written in the coordinates \eqref{EqPfRNdSMetric}, has singularities at the horizons, which can be removed by a change of coordinates. Concretely, defining
\begin{equation}
\label{EqPfRNdSChange}
  t_0 = t - F(r)
\end{equation}
with $F$ smooth on $\R\setminus\{r_1,r_2\}$, $F'(r)=s_j\mu^{-1}$ near $r=r_j$, where $s_j=-\sgn\mu'(r_j)$, and $F\equiv 0$ for $r>r_2+1$, the metric $g$ in the coordinates $(t_0,r,\omega)$ is stationary ($\pa_{t_0}$ is Killing) and a smooth non-degenerate Lorentzian metric of signature $(1,3)$ on
\[
  M^\circ := \R_{t_0}\times(0,\infty)_r\times\Sph^2_\omega.
\]
See Figure~\ref{FigPfRN}. In terms of Eddington--Finkelstein coordinates $u,v$, we have $t_0=u$ Near the Cauchy horizon and $t_0=v$ near the event horizon.

\begin{figure}[!ht]
  \centering
  \includegraphics{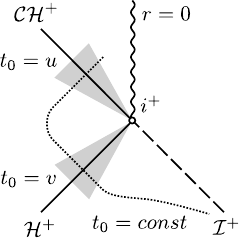}
  \caption{Penrose diagram of the Reissner--Nordstr\"om spacetime. Shown is a level set of the coordinate $t_0$, and the regions (shaded) near the event, resp.\ Cauchy horizon, where $t_0$ is equal to one of the Eddington--Finkelstein coordinates, $t_0=v$, resp.\ $t_0=u$.}
\label{FigPfRN}
\end{figure}

Our results also apply to the Kerr family of black holes; the metric of the more general Kerr--de Sitter family has the form
\begin{equation}
\label{EqPfKdSMetric}
\begin{split}
  g_{\bhm,a,\Lambda} &= -\rho^2\Bigl(\frac{dr^2}{\wt\mu}+\frac{d\theta^2}{\kappa}\Bigr) - \frac{\kappa\sin^2\theta}{(1+\gamma)^2\rho^2}(a\,dt-(r^2+a^2)\,d\phi)^2 \\
    &\qquad + \frac{\wt\mu}{(1+\gamma)^2\rho^2}(dt-a\sin^2\theta\,d\phi)^2,
\end{split}
\end{equation}
where
\begin{gather*}
  \wt\mu = (r^2+a^2)\Bigl(1-\frac{\Lambda r^2}{3}\Bigr) - 2\bhm r, \\
  \rho^2 = r^2+a^2\cos^2\theta, \quad \gamma = \frac{\Lambda a^2}{3}, \quad \kappa = 1+\gamma\cos^2\theta;
\end{gather*}
here, $a$ is the angular momentum. The Kerr spacetimes are the subfamily $g_{\bhm,a}:=g_{\bhm,a,0}$, and the \emph{subextremal} spacetimes are those for which $|a|<\bhm$. For $a\neq 0$, the metric $g_{\bhm,a}$ again has coordinate singularities at the two roots $0<r_1<r_2$ of $\wt\mu$, called Cauchy and event horizon, respectively; these singularities can be resolved by defining
\begin{equation}
\label{EqPfKdSChange}
  t_0 = t - F(r),\quad \phi_0 = \phi - P(r),
\end{equation}
where
\[
  F'(r) = s_j\frac{r^2+a^2}{\wt\mu},\quad P'(r)=s_j\frac{a}{\wt\mu}
\]
near $r=r_j$; here $s_j=-\sgn\mu'(r_j)$. Thus, in the coordinates $(t_0,r,\phi_0,\theta)$, the metric $g_{\bhm,a}$ is a smooth non-degenerate Lorentzian metric on
\[
  M^\circ := \R_{t_0}\times(0,\infty)_r\times\Sph^2_{\phi_0,\theta}.
\]
The singularity of the metric at $\theta=0,\pi$ is merely a coordinate singularity, related to the singular nature of the standard spherical coordinates on $\Sph^2$ at the poles; see \cite[\S6.1]{VasyMicroKerrdS}. The Penrose diagram of a constant $(\phi_0,\theta)$ slice of the spacetime is the same as the one depicted in Figure~\ref{FigPfRN}.

For both classes of spacetimes, we can modify the `time function' $t_0$ by putting $t_*=t_0-\wt F(r)$, with $\wt F$ smooth, such that $dt_*$ is spacelike for $r\leq r_1+2\delta$ and $r\geq r_2-2\delta$, where $0<\delta<(r_2-r_1)/4$ is any fixed number.

In order to succinctly state our main result, we define the collections of vector fields
\begin{equation}
\label{EqPfVf}
  \cM_0 = \{ \pa_{t_*}, \Omega_1, \Omega_2, \Omega_3 \}, \quad \cM = \{\pa_r\}\cup\cM_0,
\end{equation}
where the $\Omega_j$, which are vector fields on $\Sph^2$, span the tangent space of $\Sph^2$ at every point. Moreover, we recall from \S\ref{SecIntro} the space
\[
  H^{s+\ell\log}(\R^n) = \{ u \colon \la\xi\ra^s\la\log\la\xi\ra\ra^\ell\wh u(\xi)\in L^2(\R^n_\xi) \}.
\]

\begin{thm}
\label{ThmPfMain}
  Let $g$ be the metric of a subextremal Reissner--Nordstr\"om spacetime or a Kerr spacetime with $a\neq 0$ very small. Let $u$ be the solution of the Cauchy problem for $\Box_g u=0$ in $r>r_1$, with initial data which are smooth and compactly supported in $\{t_*=0,\ r>r_2-\delta\}$. Fix a radius $r_+>r_2$. Suppose $\alpha>3/2$ is such that for all $\eta>0$, $N\in\Z_{\geq 0}$, all vector fields $V_1,\ldots,V_N\in\cM$, there exists a constant $C_\eta$ such that the estimate
  \begin{equation}
  \label{EqPfMainAssReg}
    |V_1\cdots V_N u(r,t_*)| \leq C_\eta t_*^{-\alpha}, \quad r_2+\eta\leq r\leq r_+ + 1,
  \end{equation}
  holds. Then for every $\eps>0$, $N\in\Z_{\geq 0}$, and all vector fields $V_1,\ldots,V_N\in\cM_0$, we have 
  \begin{equation}
  \label{EqPfMainResReg}
    V_1\cdots V_N u \in t_*^{-\alpha+3/2+\eps}H^{1/2+(1/2+\eps)\log}
  \end{equation}
  near $r=r_1$. In particular, $u$ extends continuously to $\cCH^+$, and for every $\eps>0$, there exists a constant $C$ such that the uniform estimate
  \begin{equation}
  \label{EqPfMainResBdd}
    |u(r,t_*)|\leq Ct_*^{-\alpha+3/2+\eps},\quad r_1<r<r_1+1
  \end{equation}
  holds.
  
  If we make the assumption
  \begin{equation}
  \label{EqPfMainAssReg2}
    |V_1\cdots V_N D_{t_*}u(r,t_*)| \leq C_\eta t_*^{-\alpha-1}, \quad r_2+\eta\leq r\leq r_+ + 1
  \end{equation}
  in addition to \eqref{EqPfMainAssReg}, then we have the stronger conclusion
  \begin{equation}
  \label{EqPfMainResBdd2}
    |u(r,t_*)|\leq Ct_*^{-\alpha+1+\eps}, \quad r_1<r<r_1+1.
  \end{equation}
\end{thm}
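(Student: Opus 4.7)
I follow the four-step strategy outlined in \S\ref{SubsecIntroSketch}.

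First, spatially localize. Choose a smooth cutoff $\chi(r)$ supported in $r<r_++1$ and identically $1$ on $r\leq r_2+\delta$, and set $u_\chi:=\chi u$, so that $\Box_g u_\chi=f_\chi:=[\Box_g,\chi]u$. The forcing $f_\chi$ is supported in the exterior region $r_2+\delta\leq r\leq r_++1$, where \eqref{EqPfMainAssReg} (and its analogue along $\cM$) applies; hence $f_\chi\in t_*^{-\alpha+1/2+\eps}L^2$ with respect to the b-density on the compactification of $M^\circ$ at $t_*=\infty$, and the analogous bounds hold for all derivatives along $\cM$. Next, extend the spacetime. On the exterior side, attach a de Sitter-type cosmological end at some large $r_0>r_++1$, producing a cosmological horizon $\ol\cH^+$; this eliminates the low-frequency obstruction to analyzing $\wh\Box_g(\sigma)^{-1}$ near $\sigma=0$ while leaving the region $r\leq r_++1$ untouched. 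On the interior side, glue on an artificial exterior region beyond $\cCH^+$, bounded by an artificial horizon $\ol\cH^a$, and insert a complex absorbing operator $\cQ\in\Psi^2$ supported there to decouple the extension from the physical region, in the spirit of \cite{NonnenmacherZworskiQuantumDecay,WunschZworskiNormHypResolvent}. Solve the forward Cauchy problem $(\Box_{\wt g}-i\cQ)\wt u=f_\chi$ on the extended spacetime, with Cauchy data on $H_I\cup H_{I,0}$ (see Figure~\ref{FigIntroExt}), a priori allowing exponential growth in $t_*$; by uniqueness of the wave equation, $\wt u=u_\chi$ in $r>r_1$, so it suffices to bound $\wt u$ on the entire extended spacetime.

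Now propagate the decay. Working on the b-compactification at $t_*=\infty$, the null-bicharacteristic flow of $\wt g$ has a saddle-point (radial point) structure at each of the horizons $\cCH^+$, $\ol\cH^a$, $\cH^+$, $\ol\cH^+$, and elsewhere trajectories either enter the absorbing region or terminate at the initial surface. Combining real principal type propagation, the standard radial point estimates of \cite[Proposition~2.1]{HintzVasySemilinear} at the horizons, the ellipticity of $\cQ$ in the absorbing region, and absence of real resonances of the stationary extended problem (inherited from the cosmological completion), one obtains $\wt u\in t_*^{-\alpha+1/2+\eps}H^N$ away from $\cCH^+$ for every $N$, while at $\cCH^+$ the standard microlocal blue-shift estimate yields only $\wt u\in t_*^{-\alpha+1/2+\eps}H^{1/2-\delta}$, which does not embed into $L^\infty$. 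To cross the threshold, apply the logarithmically improved radial point estimate, Theorem~\ref{ThmRadialLogOut}, which upgrades the regularity at $\cCH^+$ to the borderline space $H^{1/2+(1/2+\eps)\log}$ at the cost of an additional factor of $t_*^{1+\delta}$ in the weight, giving $\wt u\in t_*^{-\alpha+3/2+\eps}H^{1/2+(1/2+\eps)\log}$. Tangential regularity in the vector fields of $\cM_0$ is preserved throughout: $\pa_{t_*}$ by stationarity of $g$, and spherical derivatives via the rotational symmetries in the Reissner--Nordstr\"om case and via the (almost commuting) second-order Carter operator \cite{CarterKillingTensor,AnderssonBlueHiddenKerr} in the Kerr case. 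The one-dimensional Sobolev embedding $H^{1/2+(1/2+\eps)\log}(\R)\hra L^\infty(\R)$ in the variable $r$ then delivers \eqref{EqPfMainResBdd}.

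The sharper bound \eqref{EqPfMainResBdd2} under the extra hypothesis \eqref{EqPfMainAssReg2} follows by running the entire scheme on $D_{t_*}u$ (which by assumption decays at rate $t_*^{-\alpha-1}$ together with all of its $\cM$-derivatives) and then recovering $u$ by integration in $t_*$ from $t_*=\infty$, where $u$ vanishes; the mixed weighted-Sobolev integration exchanges one power of $t_*$ for the gap between pointwise and $L^2$-in-$t_*$ norms and thereby gains the additional half-power of decay. The main obstacle is Theorem~\ref{ThmRadialLogOut}, the logarithmic improvement of the radial point estimate at the borderline regularity $s=1/2$, where the standard positive commutator argument degenerates; dealing with this requires a careful use of logarithmic weight symbols in Melrose's b-calculus and is the technical heart of the paper. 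A secondary subtlety, in the Kerr case, is to verify that the Carter operator genuinely yields tangential regularity uniformly up to $\cCH^+$, so that the $L^\infty$ bound on $r$-sections suffices to conclude pointwise control on $u$.
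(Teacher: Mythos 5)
Your structural outline matches the paper's proof in broad strokes, but there is a critical gap in the propagation step (what you call "propagate the decay"). You assert decay of $\wt u$ away from $\cCH^+$ by invoking, among other things, "absence of real resonances of the stationary extended problem (inherited from the cosmological completion)." This is incorrect: the extended operator $\cP=\Box_{\wt g}-i\cQ$ \emph{does} have a real resonance, at $\sigma=0$, with $\ker\wh\cP(0)$ spanned by the indicator function $1_{\{r>r_1\}}$ --- exactly as in Schwarzschild--de Sitter or Reissner--Nordstr\"om--de Sitter. Consequently, a bare combination of real principal type propagation, radial point estimates, complex absorption, and a normal-operator/Mellin argument does not produce polynomial decay of $\wt u$; it only shows that $\wt u$ tends to a constant as $t_*\to\infty$.

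The hypothesis \eqref{EqPfMainAssReg} does much more in the actual proof than give $L^2$ decay of the source $f_\chi$: it is what controls the zero-resonance contribution. In \S\ref{SubsecPfAsy}, after cutting off near $\tau=0$ and Mellin-transforming $N(\cP)u_e=f_e$, one writes $\wh\cP(\sigma)^{-1}=\sigma^{-1}A_{-1}+A(\sigma)$ with $A_{-1}$ rank one and range $\C\cdot 1_{\{r>r_1\}}$. The inverse Mellin transform of the singular part is a $t_*$-dependent multiple $u_{e,-1}(t_*)\,1_{\{r>r_1\}}$; since this restricts to a constant function on $\{r>r_2+\delta'\}$, where $\wt u=u_\chi$ is a priori known to decay by \eqref{EqPfMainAssReg}, the coefficient $u_{e,-1}(t_*)$ must itself decay, and hence so does the contribution of the pole over the whole domain. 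This is the mechanism by which exterior decay is transported to decay at $\cCH^+$. One also needs Lemma~\ref{LemmaPfSolAbsence}, which ensures the complex absorption creates no \emph{additional} real resonances hidden behind $\cCH^+$, so that the meromorphic structure above (and the smallness-of-$a$ perturbation argument giving the precise form of the zero resonance) is exactly as stated. Both of these ingredients, which occupy most of \S\S\ref{SubsecPfSol}--\ref{SubsecPfAsy} plus Appendix~\ref{SecPfSolAbs}, are absent from your proposal; as written, the argument would stall at the point where decay (as opposed to mere boundedness) of $\wt u$ near $\cCH^+$ must be established.
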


\begin{rmk}
\label{RmkPfFiniteReg}
  A close inspection of the proof shows that if we assume \eqref{EqPfMainAssReg} for $N=0,\ldots,N_0$, then  one obtains \eqref{EqPfMainResReg} for $N=0,\ldots,N_0-2$, and likewise for the additional assumption \eqref{EqPfMainAssReg2} and the stronger conclusion \eqref{EqPfMainResReg} with $-\alpha+3/2+\eps$ replaced by $-\alpha+1+\eps$. In particular, in order to obtain the $L^\infty$ decay statement \eqref{EqPfMainResBdd}, we need to control $N_0=4$ derivatives of $u$ in the exterior domain, while for \eqref{EqPfMainResBdd2}, we need $N_0=5$ derivatives (one of which is $D_{t_*}$).
\end{rmk}

The proof of Theorem~\ref{ThmPfMain}, subdivided into several steps, proceeds along the lines outlined in \S\ref{SubsecIntroSketch}, and is given in \S\S\ref{SubsecPfLoc}--\ref{SubsecPfRad}.

\emph{We will use the notation of Theorem~\ref{ThmPfMain} throughout the remainder of this section.}

\subsection{Localization away from spatial infinity}
\label{SubsecPfLoc}

Let us study the forcing problem
\[
  \Box_g u = f \in \CIc(M^\circ);
\]
any initial value problem can easily be converted into this problem, with $f$ supported close to the Cauchy surface. Then, let $\chi\in\CI(\R)$ be a cutoff function,
\[
  \chi(r)\equiv 1,\quad r\leq r_+,\qquad \chi(r)\equiv 0,\quad r\geq r_+ + 1.
\]
Define
\begin{equation}
\label{EqPfLocUchiFchi}
  u_\chi := \chi u,\quad f_\chi := \chi f + [\Box_g,\chi]u,
\end{equation}
so $\Box_g u_\chi=f_\chi$. Since $f$ has compact support, the assumption \eqref{EqPfMainAssReg} on $u$ implies that $u_\chi$ and $f_\chi$ satisfy the same estimates. Thus, $u_\chi$ has $L^2$ decay
\[
  V_1\ldots V_N u_\chi \in t_*^{-\alpha+1/2+\eps}L^2(\R_{t_*}\times(0,\infty)_r\times\Sph^2),\quad \eps>0,
\]
likewise for $f_\chi$; this in turn can be rewritten as
\begin{equation}
\label{EqPfLocL2Decay}
  u_\chi,f_\chi \in t_*^{-\alpha+1/2+\eps}H^\infty.
\end{equation}

\subsection{Modification and extension of the spacetime}
\label{SubsecPfMod}

In order to understand regularity and decay of $u_\chi$ uniformly up to the Cauchy horizon, we aim to construct a suitable extension of the spacetime in consideration beyond the Cauchy horizon. On the extension, we will then solve a wave-type equation in \S\ref{SubsecPfSol} whose solution $\wt u$ will agree with $u_\chi$ in $r>r_1$; properties of $\wt u$ near $r=r_1$ then give the corresponding uniform properties of $u_\chi$ in $r>r_1$.

First, we replace the asymptotically flat end at $r=\infty$ by an asymptotically hyperbolic one. Concretely, let $r_{m,+}>r_+$ be such that the trapped set (i.e.\ the projection of the trapped set in the cotangent bundle of $M^\circ$ to the base) lies in $r<r_{m,+}$. Choose a cutoff function $\chi_m\in\CI(\R)$ with $\chi_m(r)\equiv 1$, $r\leq r_{m,+}$, and $\chi_m(r)\equiv 0$, $r\geq r_{m,+}+1$. For $\Lambda>0$ and in the Reissner--Nordstr\"om case $g=g_{\bhm,Q}$, we define
\[
  \wt g := \chi_m g_{\bhm,Q} + (1-\chi_m)g_{\bhm,Q,\Lambda},
\]
while in the Kerr case $g=g_{\bhm,a}$, we take
\[
  \wt g := \chi_m g_{\bhm,a} + (1-\chi_m)g_{\bhm,a,\Lambda}.
\]
For sufficiently small $\Lambda$, the tensor $\wt g$ is then again a Lorentzian metric, and now there is an additional \emph{cosmological horizon} at $r=r_3$, which is the largest positive root of the function $\mu_{\bhm,Q,\Lambda}$ in the Reissner--Nordstr\"om--de Sitter case \eqref{EqPfRNdSMetric}, and of $\wt\mu_{\bhm,a,\Lambda}$ in the Kerr--de Sitter case \eqref{EqPfKdSMetric}, where we made the spacetime parameters explicit as subscripts. We remark that in the Kerr case, $\wt g$ is a small perturbation of the Kerr--de Sitter metric in $\{r_2-\delta<r<r_3+\delta\}$, and hence for small $a$ a small perturbation of Schwarzschild--de~Sitter space in this region, which we will use in \S\ref{SubsecPfAsy}. Defining $t_0$ (and $\phi_0$ in the Kerr case) near $r=r_3$ as in \eqref{EqPfRNdSChange} (resp.\ \eqref{EqPfKdSChange}), we can again extend the metric past the coordinate singularity at $r=r_3$. The dynamics of the null-geodesic flow in the region $r_1<r<r_3+1$ are completely analogous to the dynamics of the null-geodesic flow on Reissner--Nordstr\"om--de Sitter or Kerr--de Sitter space in the past of the Cauchy horizon; see also Figures~\ref{FigPfSolFlow} and \ref{FigPfSolFlow2} below.

To make the region near $r=r_1$ amenable to our analysis, we modify the function $\mu$ (resp.\ $\wt\mu$): Fix $\delta>0$ and $r_0,r_{\cQ,\pm}$ such that $0<r_0-\delta<r_{\cQ,-}<r_0<r_{\cQ,+}<r_1$, and let $\mu_*$ (resp.\ $\wt\mu_*$) be a smooth function, equal to $\mu$ (resp.\ $\wt\mu$) in $r\geq r_{\cQ,+}$, but such that it has exactly one additional simple root at $r=r_0$; thus $s_0=-\sgn\mu'(r_0)=-1$ (resp.\ $s_0=-\sgn\wt\mu'(r_0)=-1$). In $r\leq r_1$, we then let $\wt g$ be equal to the metric \eqref{EqPfRNdSMetric} with $\mu$ replaced by $\mu_*$ (resp.\ \eqref{EqPfKdSMetric}, with $\wt\mu$ replaced by $\wt\mu_*$). This creates an \emph{artificial horizon} $\ol\cH^a$ at $r=r_0$ on the modified spacetime
\[
  \wt M^\circ = \R_{t_0}\times(r_0-2\delta,r_3+2\delta)_r\times\Sph^2.
\]
We can again modify $t_0$ by letting $t_*=t_0-\wt F(r)$, $\wt F$ smooth on $\R$, such that $dt_*$ is timelike for $r\in[r_0-2\delta,r_1+2\delta]$ and $r\in[r_2-2\delta,r_3+2\delta]$; see \cite[\S2]{HintzVasyCauchyHorizon} for details. See Figure~\ref{FigPfModPenrose} for the Penrose diagram of the modified spacetime.

\begin{figure}[!ht]
  \centering
  \includegraphics{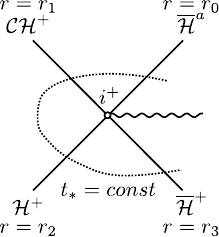}
  \caption{Penrose diagram of the modified and extended spacetime $\wt M^\circ$. We add a cosmological horizon $\ol\cH^+$ at $r=r_3$ and an artificial horizon $\ol\cH^a$ at $r=r_0$, beyond $\cCH^+$. The dotted line is a level set of the coordinate $t_*$, which is spacelike for $r_j-2\delta\leq r\leq r_{j+1}+2\delta$, $j=0,2$.}
\label{FigPfModPenrose}
\end{figure}

The spacetime $(\wt M^\circ,\wt g)$ is time-orientable: We can define a time-orientation by letting $dt_*$ be future timelike in $r\geq r_2-2\delta$; then $-dr$ is future timelike in $r_1<r<r_2$, and $-dt_*$ is future timelike in $r\leq r_1+2\delta$.

The dynamics in the region $r_0-\delta<r<r_1$ (with $\delta>0$ small) will be irrelevant below since we will use a complex absorbing operator there, while the dynamics near $r=r_1$ are dictated by the fact that $r=r_1$ defines a horizon with non-zero surface gravity.

\begin{rmk}
  In the Reissner--Nordstr\"om case, we could in addition require $\mu_*$ to have a single non-degenerate critical point at $r_{P,*}\in(r_0,r_1)$; then, the qualitative behavior of $\mu_*$ in a neighborhood of $[r_0,r_1]$ is the same as that near $[r_2,r_3]$, so the region $r_0-\delta<r<r_1+\delta$ has the same null-geodesic dynamics as a neighborhood of the exterior a Schwarzschild--de Sitter black hole, and one may reasonably call $r_0<r<r_1$ an `artificial exterior region.' Due to the more delicate algebra for the trapping in Kerr, such an intuition is more difficult to arrange and justify.
\end{rmk}

\begin{rmk}
\label{RmkPfModSpatialInfty}
  The modification near spatial infinity is to a large extent a matter of convenience, since it allows us to use standard results \cite{VasyMicroKerrdS} on the meromorphic properties of the inverse of the stationary operator $\wh{\Box_{\wt g}}(\sigma)$, which is closely related to the spectral family of an asymptotically hyperbolic metric; see also \cite{MazzeoMelroseHyp,SaBarretoZworskiResonances,GuillarmouMeromorphic,MelroseSaBarretoVasyResolvent} for the analysis of such metrics. Without this modification, one would have to deal with an asymptotically flat end, for which the analysis of $\wh\Box_g(\sigma)^{-1}$ is rather delicate, see e.g.\ \cite{TataruDecayAsympFlat} and the references therein.
\end{rmk}

\subsection{Construction and solution of the extended problem}
\label{SubsecPfSol}

Within $\wt M^\circ$, we now define the surface
\[
  H_I := \{ r_2-\delta\leq r\leq r_3+\delta,\ t_*=0 \},
\]
which  is the Cauchy surface in Theorem~\ref{ThmPfMain}. We further define the artificial surfaces
\begin{gather*}
  H_{F,3} := \{ r=r_3+\delta,\ t_*\geq 0 \}, \\
  H_{F,2} := \{ r_1+\delta\leq r\leq r_2-\delta,\ t_*=C(r_2-\delta-r) \},
\end{gather*}
which are both spacelike (in the case of $H_{F,2}$, we need to choose $C>0$ sufficiently large); in the artificially extended region, we lastly define, using $t_{*,0}=C(r_2-r_1-2\delta)$ (which is the value of $t_*$ at the points of $H_{F,2}$ with smallest $r$):
\begin{equation}
\label{EqPfSolWhiteHoleSurf}
\begin{gathered}
  H_F := \{ r_0-\delta\leq r\leq r_1+\delta,\ t_*=t_{*,0} \}, \\
  H_{I,0} := \{ r=r_0-\delta,\ t_*\geq t_{*,0} \}.
\end{gathered}
\end{equation}
These hypersurfaces bound a submanifold with corners $\Omega^\circ\subset\wt M^\circ$. See Figure~\ref{FigPfSolOmega}.

\begin{figure}[!ht]
  \centering
  \includegraphics{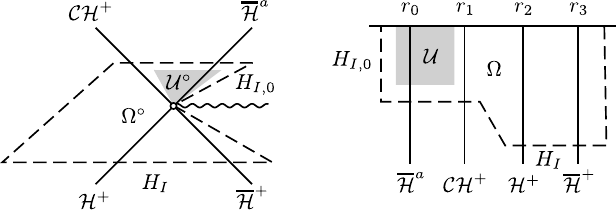}
  \caption{\textit{Left:} The domain $\Omega^\circ$ on which we study an extension of the wave equation. The shaded region $\cU^\circ$ is where we place the complex absorbing operator $\cQ$. \textit{Right:} The compactified picture: We compactify the spacetime $\wt M^\circ$ at $t_*=\infty$, obtaining $\wt M$, and similarly compactify $\Omega^\circ$ to $\Omega$, and $\cU^\circ$ to $\cU$. }
\label{FigPfSolOmega}
\end{figure}

In order to hide the (possibly complicated) null-geodesic dynamics of the metric $\wt g$ in $r<r_1$, we use a complex absorbing operator $\cQ$ as in \cite{HintzVasyCauchyHorizon}, which has Schwartz kernel supported in $\ol{\cU^\circ}\times\ol{\cU^\circ}$, where $\cU^\circ=\{r_{\cQ,-}<r<r_{\cQ,+},\ t_*>t_{*,0}+1\}$. Concretely, in the partial compactification of $\wt M^\circ$ at $t_*=\infty$ defined by
\begin{equation}
\label{EqPfSolComp}
  \wt M := [0,\infty)_\tau \times (r_0-2\delta,r_3+2\delta)_r \times \Sph^2,\quad \tau=e^{-t_*},
\end{equation}
we take $\cQ\in\Psib^2(\wt M)$ to be dilation-invariant near $\tau=0$; more precisely, we assume $\cQ=N(\cQ)$ in $\tau<e^{-t_{*,0}-1}$, where $N(\cQ)$ denotes the normal operator of $\cQ$.

\begin{rmk}
\label{RmkPfSolNoB}
  We can choose $\cQ$ without reference to the b-calculus as follows: Using the coordinates $(t_*,r,\omega,t_*',r',\omega')$ on $\cU\times\cU$, we have $\cQ\in\Psi^2(\wt M^\circ)$, the Schwartz kernel $K_\cQ$ of $\cQ$ is independent of $t_*'-t_*$ for large $t_*$, that is,
\[
  K_\cQ(t_*,r,\omega,t_*',r',\omega') = K'_{N(\cQ)}(t_*'-t_*,r,\omega,r',\omega'),\quad t_*\gg t_{*,0},
\]
for a suitable distribution $K'_{N(\cQ)}$; and $t_*'-t_*$ is bounded on $\supp K_\cQ$, i.e.\ $K_{N(\cQ)}$ vanishes for large $t_*'-t_*$. The normal operator $N(\cQ)$ then has Schwartz kernel
\[
  K_{N(\cQ)}(t_*,r,\omega,t_*',r',\omega') = K'_{N(\cQ)}(t_*'-t_*,r,\omega,r',\omega').
\]
\end{rmk}

We assume that $\cQ$ is elliptic in $\cU=\{r_{\cQ,-}<r<r_{\cQ,+},\ \tau<e^{-t_{*,0}-1}\}$, and further that the sign of the real part of its principal symbol $\sigma(\cQ)$ is non-negative in the future light cone and non-positive in the past light cone.

We furthermore introduce a smooth and stationary potential $V\in\CI(\Omega^\circ)$, which has compact support in the spatial variables $(r,\omega)$, with support contained in the set $\{\chi_m\equiv 0\}$, and so that $V\geq 0$ is positive on a non-empty open set. We then consider the operator
\[
  \cP := \Box_{\wt g} - V - i\cQ
\]
on the domain $\Omega$, which is the closure of $\Omega^\circ$ in $\wt M$; $\cP$ is (b-)pseudo-differential near $\cU$, but differential everywhere else, in particular near the hypersurfaces defined above. The natural function spaces are weighted b-Sobolev spaces,
\[
  \Hb^{s,r}(\Omega) \equiv e^{-r t_*}H^s(\Omega^\circ);
\]
thus, we measure regularity with respect to the vector fields in the set $\cM$ defined in \eqref{EqPfVf}. Let us denote by $X=\Omega\cap\pa\wt M$ the boundary of $\Omega$ at infinity, so
\[
  X = \{ r_1-\delta\leq r\leq r_3+\delta \} \times \Sph^2,
\]
identified with $\{\tau=0\}\times X\subset\wt M$.

As mentioned in \S\ref{SubsecIntroSketch}, the analysis of the extended operator proceeds exactly as in \cite{HintzVasyCauchyHorizon}: Our modified and extended spacetime $\wt M$ has the same geometric and dynamical structure (spacelike boundaries, radial points at the horizons, normally hyperbolic trapping) --- see Figures~\ref{FigPfSolFlow} and \ref{FigPfSolFlow2} ---, and the extended problem has \emph{almost} the same analytic properties (complex absorption), as the cosmological spacetimes considered there, and hence we simply refer to the reference for details. The only, very minor, difference is that we do not assume $\cQ$ to be symmetric here, since we will need to perturb $\cQ$ slightly by a non-symmetric operator to arrange a technical condition, see Lemma~\ref{LemmaPfSolAbsence} below. The analysis is unaffected by this, since the non-vanishing of the real part of $\cQ$ on $\cU^\circ$ guarantees ellipticity there, regardless of the imaginary part.

\begin{figure}[!ht]
  \centering
  \includegraphics{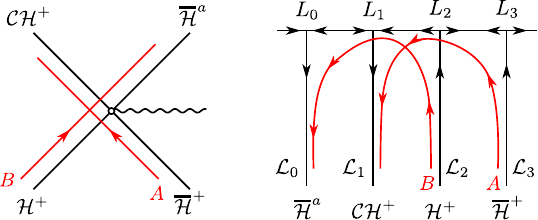}
  \caption{Future-directed null-geodesic flow near the horizons: The null-geodesic flow lifted to the b-cotangent bundle $\Tb^*\wt M$ extends smoothly to the boundary $\tau=0$, and has saddle points $L_j$ at (the b-conormal bundles of) the intersection of $r=r_j$ with the boundary $\tau=0$, with stable (resp.\ unstable) manifolds $\cL_2$, $\cL_3$ (resp.\ $\cL_0$, $\cL_1$). This corresponds to the red-shift effect near $\cH^+$ and $\ol\cH^+$, and the blue-shift effect near $\ol\cH^a$ and $\cCH^+$, which are microlocal \emph{radial point} estimates, see \S\ref{SubsecRadial}.}
\label{FigPfSolFlow}
\end{figure}

\begin{figure}[!ht]
  \centering
  \includegraphics{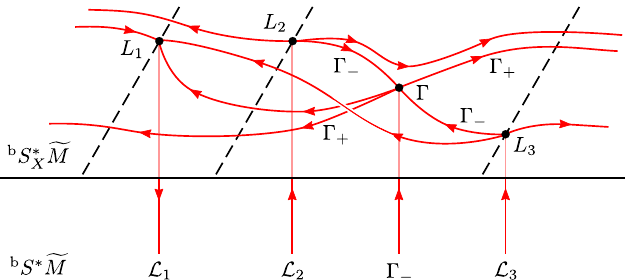}
  \caption{Future-directed null-geodesic flow in phase space $\Sb^*\wt M=\Tb^*\wt M/\R_+$, focusing on the structure of the trapping $\Gamma$, with its stable ($\Gamma_-$) and unstable ($\Gamma_+$) manifolds crossing the horizons $r=r_j$, $j=1,2,3$, transversally, or tending towards the conormal bundle $L_j$ of a horizon.}
\label{FigPfSolFlow2}
\end{figure}

The results which are essential for us are then:

\begin{enumerate}
\item \label{ItPfSolMeromorphic} The operator family $\wh\cP(\sigma)$, obtained by conjugating the normal operator $N(\cP)$ by the Mellin transform in $\tau$ (equivalently, the Fourier transform in $-t_*$), has a meromorphic inverse. (This is why we modified the spacetime to be asymptotically hyperbolic by adding a cosmological horizon.) Moreover, in any fixed strip $-\gamma\leq\Im\sigma\leq C'$ with $\gamma>0$ sufficiently small, $\wh\cP(\sigma)^{-1}$ only has finitely many poles. We also have high energy estimates, which we state in a very non-optimal form which is however sufficient for our purposes: For $|\Im\sigma|<\gamma$, $\gamma>0$ sufficiently small and fixed, and for $\sigma$ a fixed distance away from any pole of $\wh\cP(\sigma)$, we have the following estimate for $s\in\Z_{\geq 0}$ and for $\delta'>0$ arbitrary but fixed:
  \begin{align}
  \label{EqPfSolHighEnergy}
    \bigl\|(\wh\cP(\sigma)^{-1}&v)|_{\{r<r_1+3\delta'\}}\bigr\|_{H^{-1}} + \sum_{j=0}^s\la\sigma\ra^j\bigl\|(\wh\cP(\sigma)^{-1}v)|_{\{r>r_1+2\delta'\}}\bigr\|_{H^{s-j}} \nonumber\\
      &\leq C\la\sigma\ra\Bigl(\bigl\|v|_{\{r<r_1+2\delta'\}}\bigr\|_{H^{-1}} + \sum_{j=0}^s\la\sigma\ra^j\bigl\|v|_{\{r>r_1+\delta'\}}\bigr\|_{H^{s-j}}\Bigr),
  \end{align}
  for all $v\in\CIc(X^\circ)$. The estimates for $\sigma$ in $\Im\sigma\leq 0$ use the normally hyperbolic nature of the trapping, which for Reissner--Nordstr\"om is easy to establish using spherical symmetry (see \cite{HintzVasyCauchyHorizon} for the case of Reissner--Nordstr\"om--de Sitter, which is completely analogous in this respect), and which for Kerr spacetimes was established by Wunsch and Zworski \cite{WunschZworskiNormHypResolvent} for small angular momenta and by Dyatlov \cite{DyatlovWaveAsymptotics} for the full subextremal range. The restriction to small $\Im\sigma$ in $\Im\sigma>0$ can be relaxed if one replaces the $H^{-1}$ norm in \eqref{EqPfSolHighEnergy} by a weaker Sobolev norm on the left and a stronger one on the right; the precise regularity one can put in, depending on $\Im\sigma$, is dictated by the dynamics near the radial points for the operator $\wh\cP(\sigma)$ at the horizons. The regularity at the horizons will be discussed in \S\S\ref{SubsecPfRad} and \ref{SecRadial}; see also Appendix~\ref{SecPfSolAbs}, where the relation of the weak norm one needs to put in on the left to the surface gravity of the Cauchy horizon is given explicitly.

  The estimate \eqref{EqPfSolHighEnergy} is stated using the standard $H^{-1}$ space rather than the more natural semiclassical Sobolev space $H^{-1}_{\la\sigma\ra^{-1}}$ (or, even better, variable order spaces, see \cite[\S2.4]{HintzVasyCauchyHorizon}), since this will make the function spaces later on more manageable, see in particular \eqref{EqPfAsyPaleyWiener1}. To pass from the semiclassical to the standard Sobolev space, we observe that the inequalities $\la h\xi\ra^{-1}\lesssim h^{-1}\la\xi\ra^{-1}\lesssim h^{-1}\la h\xi\ra^{-1}$, $0<h<1$, $\xi\in\R^n$, imply
  \begin{equation}
  \label{EqPfSolSclStdSob}
    \|v\|_{H_h^{-1}} \lesssim h^{-1}\|v\|_{H^{-1}} \lesssim h^{-1}\|v\|_{H_h^{-1}},\quad v\in\CIc(X^\circ).
  \end{equation}
\item \label{ItPfSolSolvability} Given $f_\chi$ as in \eqref{EqPfLocUchiFchi}, and given $\nu<0$, there exists $f'\in\CIc(\cU^\circ)$ (needed to ensure solvability for $\cP$ --- the potential issue being the pseudodifferential complex absorption $\cQ$) such that the equation
\begin{equation}
\label{EqPfSolExt}
  \cP \wt u=\wt f:=f_\chi+f'
\end{equation}
has a solution $\wt u$ with the following properties:
  \begin{enumerate}
  \item $\wt u$ vanishes near $H_I$ (and also near $H_{I,0}$), hence by uniqueness of the forward problem for the wave equation,
  \begin{equation}
  \label{EqPfSolExtAgree}
    \wt u=u_\chi\quad\tn{in }r>r_1;
  \end{equation}
  \item we have $\wt u\in\Hb^{0,\nu}(\Omega)$, i.e.\ $\wt u$ grows at most at an exponential rate relative to $L^2$; moreover,
  \begin{equation}
  \label{EqPfSolExtReg2}
    \wt u|_{\{r>r_1+\delta'\}}\in\Hb^{\infty,\nu}(\Omega\cap\{r>r_1+\delta'\})
  \end{equation}
  for any $\delta'>0$.
  \end{enumerate}
\end{enumerate}

In view of \itref{ItPfSolMeromorphic}, we may pick $\nu<0$ in \itref{ItPfSolSolvability} so small that there are no resonances, i.e.\ poles of $\wh\cP(\sigma)^{-1}$, in the strip $0<\Im\sigma<-\nu$.

The technical property alluded to above is the following:

\begin{lemma}
\label{LemmaPfSolAbsence}
  There exists $\cQ$, satisfying the assumptions listed before Remark~\ref{RmkPfSolNoB}, such that there are no real resonances $\sigma\in\R$ for the backwards problem for $\cP$ in the region $r_0-\delta\leq r\leq r_1+\delta$: That is, for no $\sigma\in\R$ does there exist a distributional mode solution $u(r,\omega)$ of the equation $(\Box_{\wt g}-V-i\cQ)e^{-it_*\sigma}u(r,\omega)=0$ which vanishes near $r=r_0-\delta$ and $r=r_1+\delta$.
\end{lemma}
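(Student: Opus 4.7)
My plan is to start with any tentative $\cQ_0$ satisfying the principal-symbol hypotheses listed before Remark~\ref{RmkPfSolNoB}, and then perturb it by a small lower-order non-self-adjoint term so as to push every real resonance of the backwards problem off the real axis. Concretely, I would set $\cQ_s := \cQ_0 + i s q$ with $s \in \R$ small and $q \in \CIc(\cU^\circ;[0,\infty))$; since $q$ is of order zero, $\cQ_s$ retains the principal-symbol assumptions, and $\cP_s := \Box_{\wt g} - i\cQ_s = \cP_0 + sq$ differs from $\cP_0$ by a real multiplication operator supported in $\cU^\circ$ --- this is the non-symmetric addition alluded to in the paragraph preceding the lemma statement.

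First I would establish finiteness of the set of real resonances of $\cP_0$: by the meromorphy of $\wh{\cP_0}(\sigma)^{-1}$ from \itref{ItPfSolMeromorphic} together with the high-energy bound \eqref{EqPfSolHighEnergy}, any real resonances lie in a bounded subset of $\R$ and form a discrete set, hence are finite in number. Analytic Fredholm theory applied to the holomorphic-in-$(\sigma,s)$ family $\wh{\cP_s}(\sigma)$ then yields analytic dependence $s \mapsto \sigma_j(s)$ of these finitely many resonances under the perturbation, away from isolated exceptional values of $s$.

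The crux of the argument is a standard first-order perturbation computation: for a simple real resonance $\sigma_0$ of $\cP_0$ with mode $u$ (spanning $\ker \wh{\cP_0}(\sigma_0)$) and dual mode $v$ (spanning $\ker \wh{\cP_0}(\sigma_0)^*$),
\[
  \pa_s \sigma_j(s)\bigr|_{s=0} = \frac{\la q u, v\ra}{\la \pa_\sigma \wh{\cP_0}(\sigma_0) u, v\ra}.
\]
I would first argue non-vanishing of the numerator for a suitably chosen $q$: if $\la q u, v\ra = 0$ for every admissible bump $q$, then $u\bar{v} \equiv 0$ on $\cU^\circ$ and hence both $u$ and $v$ vanish on an open subset of the slab; unique continuation --- executed by spherical-harmonic decomposition in the Reissner--Nordstr\"om case or by separation using the Carter operator in the Kerr case, followed by an ODE analysis on $(r_0-\delta, r_1+\delta)$ whose only singular points $r=r_0, r_1$ are non-degenerate horizons --- then propagates the vanishing across the entire slab, contradicting the non-triviality of the mode $u$. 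By varying $q$ within the dense cone of non-negative bumps supported in $\cU^\circ$, the resulting quotient can be arranged to have non-zero imaginary part; since there are only finitely many real resonances $\sigma_0$ to handle, a single generic $q$ moves all of them off the real axis for small $s \neq 0$.

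The main obstacle is the unique-continuation step, especially in the Kerr case where the separation using the Carter operator is less clean than spherical-harmonic decomposition and one must treat $r=r_0, r_1$ as regular-singular points of the radial ODE; the boundary points $r = r_0-\delta, r_1+\delta$ themselves are non-degenerate for $\Box_{\wt g}$ and pose no difficulty. A secondary technical point is making the analytic-Fredholm and first-order perturbation machinery rigorous in the b-pseudodifferential, non-self-adjoint setting, which is essentially formal once the Fredholm property from \itref{ItPfSolMeromorphic} is granted.
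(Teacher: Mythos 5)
You have the right overall strategy, and it is the same one the paper uses: perturb $\cQ$ by an admissible lower-order term to push the finitely many real resonances off the real axis, with the key step being the non-degeneracy of a pairing between a resonant state and a dual state. However, there are two substantive gaps.

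First, you treat each real resonance as simple and write down the naive first-order perturbation formula for $\pa_s\sigma_j$. The paper makes no simplicity assumption. It must first show that the total rank of resonances in a small disc is locally constant under admissible perturbations, distinguish ``stable'' from ``unstable'' resonances (the latter split under perturbation and can be handled by induction on rank), and then arrange by a preliminary perturbation that the pole \emph{order} of the chosen resonance is locally constant (using upper semicontinuity of the order plus the bound by the rank). Only after this preparation is a first-order computation of the derivative $\sigma_j'(0)$ legitimate. Your proposal skips all of this, so it proves the result only under an unjustified genericity hypothesis on the spectrum of $\wh\cW(\sigma)$.

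Second, and more seriously, your unique-continuation step fails exactly where you invoke it. You argue: if $\la q u,v\ra=0$ for every bump $q\in\CIc(\cU^\circ)$ then $u\bar v\equiv 0$ on $\cU^\circ$, so one of $u,v$ vanishes on an open subset of $\cU^\circ$, and then separation of variables plus ODE analysis on $(r_0-\delta,r_1+\delta)$ propagates the vanishing. But $\cU^\circ$ is precisely the region where the complex absorbing operator $\cQ$ is active and elliptic, so on $\cU^\circ$ the operator $\wh\cW(\sigma)=\wh{\Box_{\wt g}}(\sigma)-i\wh\cQ(\sigma)$ is a genuine \emph{pseudodifferential} operator, not a differential one: there is no radial ODE to analyze, and Carleman-type unique continuation is unavailable across $\cU^\circ$. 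The paper circumvents this in two linked ways: (a) it proves only a weaker support property (Corollary~\ref{CorPfSolAbsSupp}), namely that the resonant states $\phi_{jk\ell}$ and dual states $\psi$ each have support meeting a fixed strip $\cV\subset\cU$, with the uniqueness/unique-continuation arguments carried out entirely \emph{outside} $\cU$, where $\wh\cW(\sigma)$ is a wave operator; and then (b) it perturbs by a general \emph{pseudodifferential} admissible $\cR$, whose Schwartz kernel lives on $\ol\cU\times\ol\cU$ and can therefore produce a non-zero pairing $\la\wh\cR(\sigma_j)\phi_0,\psi_0\ra$ even when $\supp\phi_0\cap\cU$ and $\supp\psi_0\cap\cU$ are disjoint. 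Your restriction to a multiplication perturbation $q$ requires the supports of $\phi_0$ and $\psi_0$ to overlap in $\cU^\circ$, which is neither proved nor provable by the stated methods; the paper's pseudodifferential perturbation is essential.

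A smaller issue: you take $s\in\R$, but then the first-order correction $sq$ to $\cP$ is real and $\sigma_j'(0)$ may well be real, in which case the resonance merely slides along $\R$. The paper takes the perturbation parameter $\eps\in\C$ and chooses it so that $\sigma_j'(0)\eps\notin\R$; you would need to do the same (note that the sign condition on $\Re\sigma(\cQ)$ is stable under adding a small multiple of $iq$ with $q$ real, so this is allowed).
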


(Put differently, there are no elements in $\ker\wh\cP(\sigma)$, $\sigma\in\R$, with support in $r_0-\delta<r<r_1+\delta$.) Thus, for \itref{ItPfSolMeromorphic}--\itref{ItPfSolSolvability} above, we may assume that $\cQ$ is such that it satisfies this resonance condition. We defer the proof of this lemma to Appendix~\ref{SecPfSolAbs}.

\subsection{Asymptotic analysis of the extended solution}
\label{SubsecPfAsy}

Given the solution $\wt u$ to the equation \eqref{EqPfSolExt}, we now use \eqref{EqPfSolExtAgree} and the a priori decay and regularity assumption \eqref{EqPfLocL2Decay} on $u_\chi$ to deduce the same decay and some a priori regularity for $\wt u$.

First, we note that $\cP$ is dilation-invariant in $\tau$ (translation-invariant in $t_*$) except in the region of spacetime $t_*\leq t_{*,0}+1$, $r_{\cQ,-}\leq r\leq r_{\cQ,+}$, where the complex absorption $\cQ$ is no longer $t_*$-translation invariant. In order to obtain a dilation-invariant problem near the boundary $\tau=0$ at future infinity, we pick a cutoff $\chi_\tau\in\CI(\R_\tau)$, $\chi_\tau\equiv 1$ for $\tau<e^{-t_{*,0}-2}$ and $\chi_\tau\equiv 0$ for $\tau\geq e^{-t_{*,0}-1}$, and set $u_e=\chi_\tau\wt u$, which solves
\begin{equation}
\label{EqPfAsyNormalOpEqn}
  N(\cP)u_e = f_e,\quad f_e=f_e'+f_e'',\quad f_e':=\chi_\tau\wt f,\quad f_e'':=[N(\cP),\chi_\tau]\wt u.
\end{equation}
Now, $f_e''$ vanishes near $\tau=0$, and therefore lies in $\Hb^{\infty,\infty}$ in $r>r_1+\delta'$, and in $\Hb^{-1,\infty}$ in $r<r_1+\delta'$, for all $\delta'>0$. Taking the Mellin transform in $\tau$ (Fourier transform in $-t_*$), $\wh{u_e}(\sigma)=\int_0^\infty\tau^{-i\sigma}u_e(\tau)\,\frac{d\tau}{\tau}$, we get
\begin{equation}
\label{EqPfAsyInvMellin}
  \wh{u_e}(\sigma)=\wh\cP(\sigma)^{-1}\wh{f_e}(\sigma),\quad \Im\sigma=-\nu.
\end{equation}
Using bounds for $\wh\cP(\sigma)$, and $\wh{f_e}(\sigma)$ for $\Im\sigma\geq 0$, we wish to take the inverse Mellin transform along the contour $\Im\sigma=0$.

We write $\wh{f_e}(\sigma)=\wh{f_e'}(\sigma)+\wh{f_e''}(\sigma)$ and start by considering the second term: With $\sigma=\sigma_1+i\sigma_2$, $\sigma_1,\sigma_2\in\R$, we have, using \eqref{EqPfSolSclStdSob},
\begin{equation}
\label{EqPfAsyPaleyWiener1}
\begin{split}
  \wh{f_e''}(\sigma)|_{\{r<r_1+\delta'\}} &\in C^0\bigl([0,\infty)_{\sigma_2}; \la\sigma\ra H^{\alpha-1/2-\eps}(\R_{\sigma_1}; H^{-1}(\{r<r_1+\delta'\}))\bigr), \\
  \wh{f_e''}(\sigma)|_{\{r>r_1+\delta'\}} &\in C^0\bigl([0,\infty)_{\sigma_2}; \la\sigma\ra^{-j}H^{\alpha-1/2-\eps}(\R_{\sigma_1};H^{N-j}(\{r>r_1+\delta'\}))\bigr)
\end{split}
\end{equation}
for all $\delta'>0$ and $N\in\Z_{\geq 0}$, $0\leq j\leq N$; moreover, $\wh{f_e''}(\sigma)$ is holomorphic in $\Im\sigma>0$ with uniform bounds
\begin{equation}
\label{EqPfAsyPaleyWiener2}
  \la\sigma\ra^{-1}\|\wh{f_e''}(\sigma)\|_{H^{-1}(X)},\ \|\wh{f_e''}(\sigma)\|_{H^N(\{r>r_1+\delta'\})} \leq C_{N,\delta',\eta},
\end{equation}
for $N\in\Z_{\geq 0}$, $\Im\sigma>\eta>0$, due to the forward support property $\supp f_e''\subset\{t_*\geq 0\}$. (Since $f_e''$ is exponentially decaying, one could make much stronger statements, which however are of no use for us here.) Conversely, \eqref{EqPfAsyPaleyWiener1} and \eqref{EqPfAsyPaleyWiener2} together imply $f_e''\in t_*^{-\alpha+1/2+\eps}H^{-2}(\{r<r_1+\delta'\})\cap t_*^{-\alpha+1/2+\eps}H^\infty(\{r>r_1+\delta'\})$, $\delta'>0$, as well as the support property. (The weight $\la\sigma\ra$ in the first line of \eqref{EqPfAsyPaleyWiener1} amounts to the loss of one $D_{t_*}$ derivative in addition to the $H^{-1}$ in the spatial variables, hence we get a spacetime $H^{-2}$ membership.)

On the other hand, $f_e'$ vanishes near $r=r_1$, hence a forteriori \eqref{EqPfAsyPaleyWiener1}--\eqref{EqPfAsyPaleyWiener2} hold as well for $f_e'$ in place of $f_e''$, and hence for $f_e=f_e'+f_e''$.

Now, when shifting the contour of the inverse Mellin transform of \eqref{EqPfAsyInvMellin} down to the real line, the only potential issue are real resonances of $\cP$. However, since the metric $\wt g$ is a perturbation of a spherically symmetric one, it is easy to see that there are none. Indeed, due to Lemma~\ref{LemmaPfSolAbsence}, every real resonance must be visible in $(r_2-\delta,r_3+\delta)$ already, i.e.\ the rank of the resonance is equal to the rank of the pole of $R\circ\wh\cP(\sigma)^{-1}\circ E$, where $E\colon\CIc((r_2-\delta,r_3+\delta))\hookrightarrow \CI(X)$ denotes extension by $0$, and $R\colon\CI(X)\to\CI((r_2-\delta,r_3+\delta))$ is the restriction map. But then the integration by parts arguments of \cite[\S4]{HintzVasyKdsFormResonances}, which apply in the spherically symmetric case, specifically to Schwarzschild--de Sitter metrics and modified Reissner--Nordstr\"om metrics $\wt g$, and the non-negativity of the potential $V\not\equiv 0$ imply that the latter operator family has no poles in $\Im\sigma\geq 0$; this persists for small perturbations of the metric in the region $r_2-\delta\leq r\leq r_3+\delta$. Thus, $\wh\cP(\sigma)^{-1}$ is holomorphic in a strip around the real axis.

Let us now consider the inverse Mellin transform
\[
  u_e(\tau) = \frac{1}{2\pi}\int_{\Im\sigma=\eta} \tau^{i\sigma} \wh\cP(\sigma)^{-1}\wh{f_e}(\sigma)\,d\sigma,
\]
which is independent of $\eta>0$, and by continuity, using \eqref{EqPfAsyPaleyWiener1}, equal to its value (which are functions on level sets of $\tau$ in the spacetime) at $\eta=0$. Decay of this function in $t_*=-\log\tau$ is a consequence of \eqref{EqPfAsyPaleyWiener1} and the mapping property \eqref{EqPfSolHighEnergy} for $\wh\cP(\sigma)^{-1}$, which imply that $\wh\cP(\sigma)^{-1}\wh{f_e}(\sigma)$ lies in the spaces on the right hand side of \eqref{EqPfAsyPaleyWiener1} with $\la\sigma\ra$ and $\la\sigma\ra^{-j}$ replaced by $\la\sigma\ra^2$ and $\la\sigma\ra^{-j+1}$ due to the factor $\la\sigma\ra$ on the right hand side of \eqref{EqPfSolHighEnergy}; the only slightly subtle part is the persistence of the $H^{\alpha-1/2-\eps}$ nature of the function space in $\sigma_1$, which however follows by interpolation from the persistence of $H^0$ regularity, i.e.\ if we had $\alpha-1/2-\eps=0$ --- this is automatic simply from the operator bounds on $\wh\cP(\sigma)^{-1}$ ---, and the persistence of $H^k$ regularity for $k\in\Z_{\geq 1}$, which for $k=1$ follows from
\[
  \pa_\sigma(\wh\cP(\sigma)^{-1}\wh{f_e}(\sigma))=\pa_\sigma\wh\cP(\sigma)^{-1}\wh{f_e}(\sigma) + \wh\cP(\sigma)^{-1}\pa_\sigma\wh{f_e}(\sigma)
\]
and the fact that the holomorphicity of $\wh\cP(\sigma)^{-1}$ in a strip around the reals implies that $\pa_\sigma\wh\cP(\sigma)^{-1}$, given by the Cauchy integral formula in a disk of fixed radius around $\sigma$, satisfies the same operator bounds as $\wh\cP(\sigma)^{-1}$; similarly for larger values of $k$. The estimate \eqref{EqPfSolHighEnergy} thus gives
\begin{equation}
\label{EqPfAsyDecayLowReg}
  u_e\in t_*^{-\alpha-1/2-\eps}H^{-3}(\{r<r_1+\delta'\})\cap t_*^{-\alpha-1/2-\eps}H^\infty(\{r>r_1+\delta'\}),\quad \delta'>0,
\end{equation}
(See the discussion in the paragraph following \eqref{EqPfAsyPaleyWiener2}. The additional factor of $\la\sigma\ra$ on the right hand side of \eqref{EqPfSolHighEnergy} leads to the additional loss of one derivative in the present, crude analysis.)

We stress again that the function spaces here are \emph{spacetime} Sobolev spaces.

\begin{rmk}
\label{RmkPfAsySmallA}
  The absence of real resonances of the inverse family $\cP$ on the real line is where we used that the spacetime $(\wt M,\wt g)$ under consideration only deviates from spherical symmetry by a small perturbation. We expect that a modification of our construction guarantees this for the full subextremal Kerr family; however, we do not pursue this further here.
\end{rmk}

\subsection{Improved regularity at the Cauchy horizon}
\label{SubsecPfRad}

The passage to the normal operator family, which involved a simple cutoff, and the crude mapping properties we used above give very weak information on $u_e$ in terms of regularity; however, the polynomial weight is optimal relative to the $L^2$ decay which we started with in \S\ref{SubsecPfLoc}.

We now use that $u_e$ solves the equation $N(\cP)u_e=f_e$ and the crude regularity information in \eqref{EqPfAsyDecayLowReg} together with precise radial point estimates to re-gain regularity. The setup of the radial point estimate is detailed in \S\ref{SubsecRadial}, see also Figure~\ref{FigPfSolFlow}, and the proof that the null-geodesic flow on our spacetime $(\wt M,\wt g)$ satisfies the assumptions of the radial point propagation result follows from a simple calculation, which is presented in detail in \cite[\S\S2.2, 3.2]{HintzVasyCauchyHorizon}. If one applied the below-threshold result \cite[Proposition~2.1]{HintzVasySemilinear}, recalled here in Proposition~\ref{PropRadialRecall} \itref{ItRadialRecallOut}, or rather a version allowing for logarithmic weights in $\tau$ (recall that $t_*=\log\tau^{-1}$), one would obtain $u_e\in t_*^{-\alpha-1/2-\eps}H^{1/2-\delta}$ near $r=r_1$ for any $\delta>0$. However, in $1$ dimension (only the radial direction will be subtle, in $t_0$ and the spherical variables, $u_e$ will be shown to be smooth), $H^{1/2-\delta}$ barely fails to embed into $L^\infty$, so we need a more delicate estimate.

We therefore apply Theorem~\ref{ThmRadialLogOut} (with the order of the operator being $m=2$, the weight $r=0$, so the threshold regularity is $s=1/2$), where we take $\ell+1/2=\alpha-1/2+\eps$ for the logarithmic weight; notice that the a priori logarithmic weight in a punctured neighborhood of the radial set at $\tau=0,r=r_1$ is $\ell+1/2$. The (logarithmic) regularity requirement in the punctured neighborhood holds by a wide margin, due to the second part of \eqref{EqPfAsyDecayLowReg} in $r>r_1$, and by elliptic regularity on the elliptic set of $\cQ$ and real principal type propagation in $r<r_1$, and due to the fact that $f_e$ vanishes near $\tau=0$, $r=r_1$. Since the logarithmic weight $(\log\tau^{-1})^{-\ell}$ is equal to $t_*^{-\ell}$, we thus conclude that
\begin{equation}
\label{EqPfRadRegExtremal}
  u_e \in \bigl(t_*^{-\alpha+1-\eps}H^{1/2}(\{|r-r_1|<\delta'\})\bigr) \cap H^{1/2+(\alpha-1+\eps)\log}(\{|r-r_1|<\delta'\})
\end{equation}
for any $\eps>0$, and $\delta'>0$, and in fact the interpolated version
\begin{equation}
\label{EqPfRadReg}
  u_e \in t_*^{-\alpha+3/2-\eps}H^{1/2+(1/2+\eps)\log}(\{|r-r_1|<\delta'\}),\quad \eps>0.
\end{equation}
Now, $H^{1/2+(1/2+\eps)\log}(\R)\hookrightarrow L^\infty(\R)$ in view of $\la\xi\ra^{-1}\la\log\la\xi\ra\ra^{-1-2\eps}\in L^1$; thus, in order to conclude \eqref{EqPfMainResBdd} in Theorem~\ref{ThmPfMain}, it remains to establish the iterative regularity \eqref{EqPfMainResReg} under vector fields in the set $\cM$. (Note that we only prove the theorem for $u_e$, rather than $u$ itself, but $u_e\equiv u$ near $r=r_1$ in $\tau\leq e^{-t_{*,0}-2}$ due to the properties of $\chi_\tau$ in \eqref{EqPfAsyNormalOpEqn}; thus, considering the extended problem on a slightly larger region instead, we obtain regularity for the extended solution in $\tau\leq e^{-t_{*,0}}$, which gives the desired conclusion for $u$ upon restriction.)

To do this, we use the Killing and hidden symmetries of Reissner--Nordstr\"om and Kerr: Firstly, since $D_{t_*}$ commutes with $\Box_g$, we obtain \eqref{EqPfRadReg} for any number of $t_*$-derivatives of $u_e$. On the Reissner--Nordstr\"om spacetime, we may commute the spherical Laplacian $\Delta_{\Sph^2}$ through the equation and thus obtain smoothness in the angular variables as well, finishing the proof in this case, since this implies that $u_e$ lies in the space
\begin{equation}
\label{EqPfRadRegIter}
  u_e\in t_*^{-\alpha+3/2-\eps}H^{1/2+(1/2+\eps)\log}\bigl((r_1-\delta',r_1+\delta');H^N(\R_{t_*}\times\Sph^2)\bigr)\quad \forall N,
\end{equation}
and the $H^N$ space is contained in $L^\infty$ for $N>3/2$.

In the Kerr case, we need to use the \emph{(modified) Carter operator} $\cC\in\Diffb^2(\wt M)$ to gain regularity in the angular variables: This operator is given by
\begin{align*}
  \cC &= \frac{1}{\sin\theta}D_\theta\kappa\sin\theta D_\theta + \frac{(1+\gamma)^2}{\kappa\sin^2\theta}D_{\phi_*}^2 \\
    &\qquad + \frac{2a(1+\gamma)^2}{\kappa}D_{t_*}D_{\phi_*} + \frac{(1+\gamma)^2 a^2\sin^2\theta}{\kappa}D_{t_*}^2
\end{align*}
in the notation introduced at and after \eqref{EqPfKdSMetric}. Now, $D_{t_*}$ and $D_{\phi_*}$ commute with $\rho^2\Box_g$, and moreover the sum of the first two terms of $\cC$ is an elliptic operator on $\Sph^2$; thus, commuting $\cC$ through the equation $\rho^2\Box_g u=\rho^2 f$ in $r>r_1-2\delta$, we see that $u$ is indeed smooth in $t_*$ and the angular variables. We therefore conclude that \eqref{EqPfRadRegIter} holds in the Kerr case as well, finishing the proof of \eqref{EqPfMainResReg}.

For the last part of Theorem~\ref{ThmPfMain}, where we now assume \eqref{EqPfMainAssReg2}, i.e.\ derivatives of $D_{t_*}u_e$ in the exterior region lie in $t_*^{-\alpha-1}L^\infty$, the previous arguments yield
\begin{gather*}
  u_e\in t_*^{-\alpha+3/2-\eps}L^\infty\bigl(\R_{t_*}\times(r_1-\delta',r_1+\delta')\times\Sph^2\bigr), \\
  D_{t_*}u_e\in t_*^{-\alpha+1/2-\eps}L^2\bigl(\R_{t_*},L^\infty((r_1-\delta',r_1+\delta')\times\Sph^2)\bigr);
\end{gather*}
the inequality $\|v\|_{L^\infty}\leq t_*^{-1/2}\|t_*D_{t_*}v\|_{L^2}$ for $v\in\CIc((1,\infty)_{t_*})$ thus yields $u_e\in t_*^{-\alpha+1-\eps}L^\infty$, as claimed.

\begin{rmk}
\label{RmkPfRadAmountDecay}
  The conclusion \eqref{EqPfRadRegExtremal} only requires $\alpha>1$; however, in order to get an $L^\infty$ bound in the radial variables, even without polynomial decay, we need $\alpha>3/2$, even though in the last case of Theorem~\ref{ThmPfMain}, we only lose $1+\eps$ in the polynomial decay rate.
\end{rmk}

\section{Logarithmic improvements in radial point estimates}
\label{SecRadial}

This section is the technical heart of the paper. To prepare the logarithmic regularity improvement in the microlocal blue-shift (radial point) estimate at the Cauchy horizon, we first discuss the relevant function spaces and spaces of operators in \S\ref{SubsecSobolev} before turning to the proof of the estimate in \S\ref{SubsecRadial}.

We remark that results similar to what we prove here can be obtained in the simpler setting of manifolds without boundary; we leave the details to the interested reader.

\subsection{Sobolev spaces with logarithmic derivatives and weights}
\label{SubsecSobolev}

In this section, we discuss b-\psdo{}s, on manifolds with boundary, with logarithmic derivatives and logarithmic weights at the boundary. Since $\log\la\xi\ra$, $\xi\in\R^n$, is a symbol of order $\eps$ for all $\eps>0$, we will be able to view the `logarithmic' b-calculus as a special case of the standard b-calculus, with more precise composition properties due to the full symbolic expansion of the standard calculus.

In terms of the spaces
\[
  H^{s+\ell\log}(\R^n) := \bigl\{ u\in\sS'(\R^n)\colon \la\xi\ra^s\la\log\la\xi\ra\ra^\ell\wh u(\xi)\in L^2(\R^n_\xi)\bigr\},
\]
the logarithmic versions of weighted b-Sobolev spaces on the half-space $\Rnhalfc=[0,\infty)_x\times\R^{n-1}_y$ are defined as follows:

\begin{definition}
\label{DefSobolevRnhalf}
  For $s,\ell,r,j\in\R$, the space $\Hb^{s+\ell\log,r+j\log}(\Rnhalfc)$ consists of all distributions $u\in\CmI(\Rnhalfc)$ of the form
  \[
    u =  x^r \log^{-j}(\wt x^{-1}) u_0,\quad u_0\in\Hb^{s+\ell\log}(\Rnhalfc),
  \]
  where $\Hb^{s+\ell\log}(\Rnhalfc)=F^*(H^{s+\ell\log}(\R^n))$, $F(x,y)=(-\log x,y)$, and $\wt x$ is a smoothed out version of the function $\min(x,1/2)$, for instance $\wt x=x$ for $0\leq x\leq 1/4$, $\wt x=1/2$ for $x\geq 1/2$, and $1/4\leq\wt x\leq 1/2$ for $1/4\leq x\leq 1/2$.
\end{definition}

Thus, for $j=0$, one obtains the usual weighted space $\Hb^{s+\ell\log,r}=x^r\Hb^{s+\ell\log}$, while non-zero $j$ allows one to distinguish logarithmic orders of growth at the boundary $x=0$. The space $\Hb^{s+\ell\log,r+j\log}$ becomes smaller when any one of the quantities $s,\ell,r,j$ is increased.

In order to handle logarithmic weights, we consider b-\psdo{}s $A\in\Psib^s(\Rnhalfc)$ on $\Rnhalfc$ (with compactly supported Schwartz kernel vanishing to infinite order at the two side faces of the b-double space $\Rnhalfc\times_\bop\Rnhalfc$, as we will always implicitly assume) with merely \emph{conormal} \emph{left-reduced symbols} $a$, which are functions on $\Tb^*\Rnhalfc\cong\Rnhalfc\times\R^n$ (compactly supported in the base variables) satisfying the estimates
\[
  |(x\pa_x)^q \pa_y^\beta \pa_\xi^k \pa_\eta^\alpha a(x,y,\xi,\eta)| \leq C_{k\alpha q\beta}\la(\xi,\eta)\ra^{s-(k+|\alpha|)}\quad\forall q,k\geq 0, \forall \alpha,\beta,
\]
Under the diffeomorphism $F$ in Definition~\ref{DefSobolevRnhalf}, writing $F(x,y)\equiv(t,y)$, these are the uniform symbols $a(t,y,\xi,\eta)\in S^s_\infty(\R^n_{t,y};\R^n_{\xi,\eta})$ of \cite[\S2.1]{MelroseIML}. In fact, as in \cite[\S2.2]{MelroseIML}, one can consider more general symbols, say with support in $x,x'\leq 1/2$, satisfying
\begin{equation}
\label{EqSobolevTwoSided}
\begin{split}
  |(x\pa_x)^q\pa_y^\beta& (x'\pa_{x'})^{q'}\pa_{y'}^{\beta'} \pa_\xi^k \pa_\eta^\alpha a(x,y,\xi,\eta,x',y')| \\
    &\leq C_{k\alpha q\beta q'\beta'}\la\log x-\log x'\ra^w\la(\xi,\eta)\ra^{s-(k+|\alpha|)}
\end{split}
\end{equation}
for all $q,q',k\geq 0$ and all multi-indices $\alpha,\beta,\beta'$; here $w\in\R$ is an off-diagonal weight. Quantizations of such symbols can always be written as quantizations of left-reduced symbols (in particular, the weight $w$ is irrelevant!), so the increased generality in the symbol class does not enlarge the class of b-\psdo{}s with conormal coefficients; see \cite[\S2.4]{MelroseIML}. The point in allowing \eqref{EqSobolevTwoSided} is that it allows us to show:

\begin{lemma}
\label{LemmaSobolevConj}
  If $A\in\Psib^s(\Rnhalfc)$ has compactly supported Schwartz kernel with support in $x,x'\leq 1/2$, then $A_j:=\log^j(x^{-1})\circ A\circ \log^{-j}(x^{-1}) \in \Psib^s(\Rnhalfc)$ for any $j\in\R$.
\end{lemma}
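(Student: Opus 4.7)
The plan is to identify $A_j$ with the operator whose Schwartz kernel equals that of $A$ multiplied by
\[
  m(x,x') := \log^j(x^{-1})\log^{-j}((x')^{-1}) = \Bigl(\frac{\log x^{-1}}{\log (x')^{-1}}\Bigr)^j,
\]
and then show that multiplication by $m$ preserves the class $\Psib^s(\Rnhalfc)$. Support and side-face vanishing transfer to $A_j$ for free: on the support of $K_A$ we have $x,x'\le 1/2$, while at the side faces $m$ grows at most polynomially in $\log x^{-1}$ or $\log (x')^{-1}$, and is therefore dominated by the infinite-order vanishing of $K_A$.

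The core step is a conormal estimate on $m$. Setting $T=\log x^{-1}$, $T'=\log (x')^{-1}$, both bounded below by $\log 2$ on the support, we have $m=(T/T')^j$, and since $x\pa_x=-\pa_T$, $x'\pa_{x'}=-\pa_{T'}$, an elementary Leibniz calculation gives
\[
  (x\pa_x)^q(x'\pa_{x'})^{q'} m \;=\; c_{q,q',j}\,(T/T')^j\,T^{-q}(T')^{-q'},
\]
with $T^{-q}(T')^{-q'}\le(\log 2)^{-q-q'}$ bounded. Writing $T/T'=1+(T-T')/T'$ and using $T,T'\ge\log 2$, we see $T/T'$ is pinched between $C^{-1}\la T-T'\ra^{-1}$ and $C\la T-T'\ra$, so $(T/T')^j\le C\la T-T'\ra^{|j|}$. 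Therefore
\[
  |(x\pa_x)^q(x'\pa_{x'})^{q'} m(x,x')|\;\le\;C_{q,q'}\la\log x-\log x'\ra^{|j|},
\]
i.e.\ $m$ fits the two-sided conormal symbol class with off-diagonal weight $w=|j|$ and order $0$.

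Writing $A=\Op_L(a)$ with $a$ a standard left-reduced symbol of order $s$, the operator $A_j$ is the quantization of the two-sided symbol $m(x,x')\,a(x,y,\xi,\eta)$. By Leibniz combined with the estimate on $m$ and the conormal estimates on $a$, this product satisfies the generalized symbol estimates \eqref{EqSobolevTwoSided} of order $s$ with off-diagonal weight $w=|j|$. The left-reduction procedure of \cite[\S2.4]{MelroseIML}, recalled in the paragraph preceding the lemma and valid precisely in this enlarged symbol class with arbitrary off-diagonal weight, then converts $\Op(ma)$ into $\Op_L(\tilde a)$ for some $\tilde a$ in the standard left-reduced class of order $s$, yielding $A_j\in\Psib^s(\Rnhalfc)$.

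The only real point to check is the off-diagonal logarithmic growth of $m$, which is not accommodated by the standard left-reduced class; this is precisely why the paper prepared the two-sided class \eqref{EqSobolevTwoSided} carrying a weight $\la\log x-\log x'\ra^w$. With that generalization in hand, the left-reduction is a black box that absorbs the extra weight into a symbol of the same standard order, and the lemma follows.
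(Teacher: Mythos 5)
Your proof is correct and follows essentially the same route as the paper: both identify the full (two-sided) symbol of $A_j$ as $(\log x/\log x')^j\,a(x,y,\xi,\eta)$, check the off-diagonal bound $(\log x/\log x')^j \lesssim \la\log x-\log x'\ra^{|j|}$ (the paper states the equivalent bound $|(a/b)^j-1|\le C_\delta\la a-b\ra^{|j|}$), and invoke the left-reduction for the two-sided conormal class \eqref{EqSobolevTwoSided} to conclude $A_j\in\Psib^s$. You spell out the derivative computation $(x\pa_x)^q(x'\pa_{x'})^{q'}m = c\,(T/T')^j T^{-q}(T')^{-q'}$ that the paper leaves implicit, but the argument is otherwise identical.
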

\begin{proof}
  If $a(x,y,\xi,\eta)$ is the left-reduced symbol of $A$, then
  \[
    a_j(x,y,\xi,\eta,x',y') = \Bigl(\frac{\log x}{\log x'}\Bigr)^j a(x,y,\xi,\eta)
  \]
  is the full symbol of $A_j$. The proof is completed by noting that $|(a/b)^j-1|\leq C_\delta\la a-b\ra^{|j|}$ for $a,b\geq\delta>0$, which follows for $j\geq 0$ by writing $|(a/b)^j-1|=|(1+(a-b)/b)^j-1|$ and separating the cases $|a-b|<|b|/2$ and $|a-b|\geq|b|/2$; the inequality for $j\leq 0$ follows from the one for $j\geq 0$ by replacing $(a,b,j)$ by $(b,a,-j)$.
\end{proof}

We will also need to consider b-\psdo{}s allowing for `logarithmic' derivatives; this is accomplished by adding to the right hand side of \eqref{EqSobolevTwoSided} the factor $\la\log\ra^\ell(\xi,\eta)$, $\ell\in\R$, yielding the symbol class $S^{s+\ell\log}$; quantizations of such symbols give rise to the space $\Psib^{s+\ell\log}(\Rnhalfc)$. We can transfer such spaces of b-\psdo{}s to a manifold with boundary $M$: Indeed, since for all $\eps>0$, the inclusion $S^{s+\ell\log}\subset S^{s+\eps}$ holds, we have $\Psib^{s+\ell\log}\subset\Psib^{s+\eps}$. Furthermore, $S^{-\infty+\ell\log}=S^{-\infty}$ for any $\ell\in\R$. Hence, by the asymptotic formula for the full symbol of an element of $\Psib^{s+\eps}(\Rnhalfc)$ under a diffeomorphism which is the identity outside a compact set, the class $\Psib^{s+\ell\log}(\Rnhalfc)$ is invariant under such diffeomorphisms as well. Thus, one obtains the $*$-algebra
\[
  \bigcup_{s,\ell}\Psib^{s+\ell\log}(M)
\]
invariantly on the manifold $M$ with boundary. We can then use this calculus to define weighted b-Sobolev spaces
\[
  \Hbloc^{s+\ell\log,r+j\log}(M) = x^r\log^{-j}(x^{-1})\Hbloc^{s+\ell\log}(M),
\]
$x$ a boundary defining function of $M$ with $x\leq 1/2$ everywhere, where $\Hbloc^{s+\ell\log}(M)$ consists of all distributions $u$ on $M$ for which $A u\in L_\bop^2(M)$ for every compactly supported $A\in\Psib^{s+\ell\log}(M)$; here $L_\bop^2(M)=L^2(M,x^{-1}\nu)$ for a non-vanishing smooth density $\nu$.

Furthermore, for $u\in\Hbloc^{-\infty,r+j\log}(M)$, we can define $\WFb^{s+\ell\log,r'+j'\log}(u)$ for $s,\ell\in\R$ and $r'\leq r$, $j'\leq j$ to consist of all $\zeta\in\Sb^*M$ for which there exists $A\in\Psib^0(M)$, elliptic at $\zeta$, such that $Au\in\Hbloc^{s+\ell\log,r'+j'\log}(M)$. Notice here that $u$ lies in $\Hbloc^{-\infty,r'+j'\log}(M)$ a priori by the conditions on $r,j,r',j'$, hence these conditions ensure that the wave front set indeed measures global properties: If $\WFb^{s+\ell\log,r'+j'\log}(u)=\emptyset$ for such $u$, then $u\in\Hbloc^{s+\ell\log,r'+j'\log}(M)$. (This fact relies on elliptic regularity on these b-Sobolev spaces, which holds by the usual symbolic argument.)

We end this section by discussing a special class of operators which will naturally appear in the logarithmically improved radial point estimates in \S\ref{SubsecRadial}. To wit, let $z=(x,y)\in[0,1/2)\times\R^{n-1}_y$ and $\zeta\in\R^n$ denote coordinates on $\Tb^*M$ near $\pa M$; let $\rho=|\zeta|^{-1}$ for $|\zeta|\geq 4$ (where we use any fixed norm on the fibers of $\Tb^*M$ induced by a Riemannian b-metric on $M$), with $0<\rho<1/2$ everywhere on $\Tb^*M$, and consider the symbol
\[
  a(z,\zeta) = \log^\ell(x^{-v}\rho^{-w}),\quad \ell>0,\ v,w>0.
\]
Since
\begin{equation}
\label{EqSobolevDoubleSymbolEst}
  a(z,\zeta) = \log^\ell(x^{-1}) \Bigl(v + w\frac{\log\rho^{-1}}{\log x^{-1}}\Bigr)^\ell
\end{equation}
lies in the symbol class $\log^\ell(x^{-1}) S^{\ell\log}$, we can quantize it and obtain an operator $A$ in the class $\log^\ell(x^{-1})\circ\Psib^{\ell\log}$, for which we have good composition and mapping properties. Then:

\begin{lemma}
\label{LemmaSobolevInterpolation}
  Let $\ell,v,w>0$ and $a(z,\zeta)$, $A$ be as above.
  \begin{enumerate}
  \item \label{ItSobolevInterpolation1} Suppose
    \[
      u\in\Hbloc^{-\infty,r+\ell\log},\quad A u\in\Hbloc^{s,r}.
    \]
    Then
    \begin{equation}
    \label{EqSobolevInterpolation}
      u\in\Hbloc^{s+\alpha\ell\log,r+(1-\alpha)\ell\log},\quad\alpha\in[0,1].
    \end{equation}
  \item \label{ItSobolevInterpolation2} Conversely,
    \begin{equation}
    \label{EqSobolevInterpolation2}
      u\in\Hbloc^{s+\ell\log,r} \cap \Hbloc^{s,r+\ell\log} \implies A u\in\Hbloc^{s,r}.
    \end{equation}
    (This in turn gives \eqref{EqSobolevInterpolation}.)
  \end{enumerate}
\end{lemma}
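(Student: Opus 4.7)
Both parts rest on two elementary pointwise symbol bounds. Applied with $X = \log(x^{-1})$ and $Y = \log(\rho^{-1})$, the weighted AM--GM inequality $X^{1-\alpha}Y^{\alpha} \leq (1-\alpha)X + \alpha Y$ and the comparability $X^\ell + Y^\ell \asymp_\ell \max(X,Y)^\ell \asymp (X+Y)^\ell$ for $X,Y \geq 0$, $\ell > 0$, combine on $\{x,\rho\leq 1/2\}$ to give
\begin{equation*}
  c_0 \log^{(1-\alpha)\ell}(x^{-1}) \log^{\alpha\ell}(\rho^{-1}) \leq a(z,\zeta) \leq C_0 \bigl(\log^\ell(x^{-1}) + \log^\ell(\rho^{-1})\bigr), \quad \alpha \in [0,1].
\end{equation*}
Each part upgrades one of these symbol bounds to an operator statement in the logarithmic b-calculus of \S\ref{SubsecSobolev}.

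\textbf{Part \itref{ItSobolevInterpolation2}.} I would use the upper inequality. Introduce a cutoff $\chi(z,\zeta) = \phi\bigl(w\log(\rho^{-1})/v\log(x^{-1})\bigr) \in S^0(\Tb^*M)$ — the order follows from $\pa_\zeta\log(\rho^{-1}) = O(|\zeta|^{-1})$ — supported in $\{v\log(x^{-1}) \geq \tfrac12 w\log(\rho^{-1})\}$ and equal to $1$ on $\{v\log(x^{-1}) \geq w\log(\rho^{-1})\}$. Then on $\supp\chi$ one has $v\log(x^{-1}) + w\log(\rho^{-1}) \leq 3v\log(x^{-1})$, giving $\chi a \in \log^\ell(x^{-1}) S^0$, while on $\supp(1-\chi)$ one has $v\log(x^{-1}) + w\log(\rho^{-1}) \leq 2w\log(\rho^{-1})$, giving $(1-\chi)a \in S^{\ell\log}$. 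Writing $A = \Op(\chi a) + \Op((1-\chi)a)$ modulo b-smoothing, the first summand maps $\Hbloc^{s,r+\ell\log} \to \Hbloc^{s,r}$ and the second maps $\Hbloc^{s+\ell\log,r} \to \Hbloc^{s,r}$, so $Au \in \Hbloc^{s,r}$.

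\textbf{Part \itref{ItSobolevInterpolation1}.} I would use the lower inequality, carried out in two steps with
\begin{equation*}
  T_\alpha := \Op\bigl(\log^{(1-\alpha)\ell}(x^{-1})\log^{\alpha\ell}(\rho^{-1})\bigr) \in \log^{(1-\alpha)\ell}(x^{-1})\Psib^{\alpha\ell\log}(M).
\end{equation*}
\emph{Step~1.} From $C_0^2 a^2 - \sigma(T_\alpha)^2 \geq 0$, sharp Gårding in the b-calculus (conjugated by an elliptic b-operator whose symbol implements the $\Hbloc^{s,r}$ norm) gives
\begin{equation*}
  \|T_\alpha u\|_{\Hbloc^{s,r}} \leq C\|Au\|_{\Hbloc^{s,r}} + C\|u\|_{\Hbloc^{s',r'}},
\end{equation*}
where $(s',r')$ is strictly lower than the target exponents but no better than $(s+\ell\log, r+\ell\log)$; the a priori hypothesis $u \in \Hbloc^{-\infty, r+\ell\log}$ absorbs the error term. \emph{Step~2.} Since $\sigma(T_\alpha) \geq (\log 2)^\ell > 0$ on $\{x,\rho\leq 1/2\}$, the operator $T_\alpha$ is elliptic in its class; a parametrix $E_\alpha \in \log^{-(1-\alpha)\ell}(x^{-1})\Psib^{-\alpha\ell\log}$ (built using Lemma~\ref{LemmaSobolevConj} to pass the logarithmic boundary weights through compositions) satisfies $E_\alpha T_\alpha = I + K$ with $K$ smoothing, so $u = E_\alpha(T_\alpha u) - Ku$ sits in $\Hbloc^{s+\alpha\ell\log, r+(1-\alpha)\ell\log}$ (the second term being absorbed by the a priori regularity, since $\Hbloc^{\infty,r+\ell\log} \subset \Hbloc^{s+\alpha\ell\log, r+(1-\alpha)\ell\log}$).

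\textbf{Main obstacle.} The principal difficulty is quantifying the logarithmic gains in the sharp Gårding step and in the parametrix asymptotics. Since $\log^\ell\la\zeta\ra$ and $\log^\ell(x^{-1})$ lie inside $S^\eps$ for every $\eps > 0$, the standard conormal b-calculus of \cite{MelroseAPS} applies, but one must verify at each order of the composition expansion that the correction terms indeed sit in a strictly smaller logarithmic class, so that iteration of the parametrix and of the Gårding remainder eventually lands inside a space controlled by the a priori hypothesis $u \in \Hbloc^{-\infty, r+\ell\log}$. This bookkeeping is furnished by the symbol estimates \eqref{EqSobolevTwoSided} together with Lemma~\ref{LemmaSobolevConj}, which shows that commutators with $\log^j(x^{-1})$ strictly improve the logarithmic order.
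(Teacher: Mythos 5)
Your proof of part \itref{ItSobolevInterpolation2} is essentially the paper's: both split the symbol by a cutoff in the ratio $\log\rho^{-1}/\log x^{-1}$ into a piece dominated by $\log^\ell(x^{-1})$ and a piece dominated by $\log^\ell(\rho^{-1})$, and then use boundedness of the two summands on the two hypothesis spaces.

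For part \itref{ItSobolevInterpolation1}, however, you take a genuinely different and considerably heavier route. The paper's proof is a one-step symbolic \emph{factorization}: the ratio
\[
  b_\alpha \;:=\; \frac{\log^{(1-\alpha)\ell}(x^{-1})\,\log^{\alpha\ell}(\rho^{-1})}{\bigl(v\log x^{-1}+w\log\rho^{-1}\bigr)^\ell}
\]
is itself a symbol in $S^0$, by the AM--GM bound $\tilde x^{1-\alpha}\tilde\rho^\alpha\le(1-\alpha)\tilde x+\alpha\tilde\rho$ applied to $\tilde x=\log x^{-1}$, $\tilde\rho=\log\rho^{-1}$ (together with the analogous estimates on symbolic derivatives). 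Quantizing $b_\alpha$ and composing with $A$ then produces the quantization of $\log^{(1-\alpha)\ell}(x^{-1})\log^{\alpha\ell}(\rho^{-1})$ modulo strictly lower-order errors, and the conclusion follows from the mapping property of $\Psib^0$ and from Lemma~\ref{LemmaSobolevConj}. You instead use only the \emph{pointwise inequality} $\sigma(T_\alpha)\le C_0 a$, which is a strictly weaker fact than $\sigma(T_\alpha)/a\in S^0$, and then pay for it: you need sharp G\aa rding to upgrade the symbol inequality to an operator bound, followed by an elliptic parametrix construction for $T_\alpha$, and both steps require the delicate bookkeeping you correctly flag in your last paragraph (controlling the logarithmic gains in the G\aa rding remainder and in the parametrix asymptotics, with weight conjugations handled by Lemma~\ref{LemmaSobolevConj}). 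Your argument does not appear to have an actual gap, and the G\aa rding$+$parametrix route has the virtue of only requiring a pointwise symbol inequality rather than a symbolic quotient — which could be useful in settings where no clean factorization exists — but here the factorization is available and it collapses Step~1 and Step~2 of your argument into a single algebraic identity, avoiding G\aa rding entirely.
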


That is, we can interpolate logarithmic weights and regularity.

\begin{proof}[Proof of Lemma~\ref{LemmaSobolevInterpolation}]
  Part \itref{ItSobolevInterpolation1} follows from the fact that
  \begin{equation}
  \label{EqSobolevInterpolationPf}
    \log^{(1-\alpha)\ell}(x^{-1}) \log^{\alpha\ell}(\rho^{-1}) = b_\alpha a
  \end{equation}
  with a symbol $b_\alpha\in S^0$; this in turn is a consequence of the estimate
  \[
    \Bigl(\frac{\wt x^{1-\alpha}\wt\rho^\alpha}{v\wt x+w\wt\rho}\Bigr)^\ell \leq \Bigl(\frac{(1-\alpha)\wt x+\alpha\wt\rho}{v\wt x+w\wt\rho}\Bigr)^\ell \leq C_\ell
  \]
  for $\wt x=\log x^{-1}$ and $\wt\rho=\log\rho^{-1}$, and similar estimates for symbolic derivatives.
  
  To prove part \itref{ItSobolevInterpolation2}, we note that
  \[
    (v\log x^{-1}+w\log\rho^{-1})^\ell = \log^{\ell}(\rho^{-1})b_1 + \log^{\ell}(x^{-1})b_2
  \]
  for symbols $b_1,b_2\in S^0$; explicitly, let $\chi\in\CI(\R)$ be identically $0$ in $(-\infty,1/2]$ and identically $1$ in $[3/2,\infty)$, then we can take
  \[
    b_1 = \phi\Bigl(\frac{\log\rho^{-1}}{\log x^{-1}}\Bigr)\Bigl(1+\frac{\log x^{-1}}{\log\rho^{-1}}\Bigr)^\ell, \quad  b_2 = \Bigl(1-\phi\Bigl(\frac{\log\rho^{-1}}{\log x^{-1}}\Bigr)\Bigr)\Bigl(1+\frac{\log\rho^{-1}}{\log x^{-1}}\Bigr)^\ell.\qedhere
  \]
\end{proof}

\subsection{Setup and proof of the radial point estimate}
\label{SubsecRadial}

Let $M$ be a manifold with boundary $X$, and let $\tau$ be a boundary defining function. Let $\cP\in\Psib^m(M)$ be a b-\psdo{}\ on $M$ with homogeneous principal symbol $p\in S^m(\Tb^*M)$. We denote by $\Sigma=p^{-1}(0)\subset\Tb^*M\setminus o$ the characteristic set of $\cP$, let $\wh\rho\in\CI(\Tb^*M)$ be homogeneous of degree $-1$ away from the origin and non-vanishing everywhere, and define the rescaled Hamilton vector field
\[
  \rham_p = \wh\rho^{m-1}\ham_p,
\]
which is a b-vector field on the radial compactification $\rcTb^*M$ of the b-cotangent bundle, i.e.\ it is tangent to $\rcTb^*_X M$ and $\Sb^*M$. (See \cite[\S2]{HintzVasySemilinear} for details.) With $\Lambda\subset\Sigma\cap\Tb^*_X M\setminus o$ denoting a conic submanifold of $\Sigma$ invariant under the $\ham_p$ flow, and $L:=\pa\Lambda\subset\Sb^*_X M$, we assume that $L$ is a saddle point for the $\rham_p$ flow, more precisely a source within $\rcTb^*_X M$ (in particular also in the fiber radial direction), with a stable manifold $\cL$ transversal to $X$:
\begin{enumerate}
  \item The characteristic set $\Sigma$ is a $\CI$ codimension $1$ submanifold of $\Tb^*M\setminus o$ and transversal to $\Tb^*_X M\setminus o$;
  \item $L=\cL\cap\Sb^*_X M$, where $\cL\subset\pa\Sigma\subset\Sb^*M$ is a smooth submanifold, transversal to $\Sb^*_X M$ and invariant under the $\rham_p$ flow;
  \item we have
    \[
    \begin{split}
      \rham_p\wh\rho&=\beta_0\wh\rho,\quad \beta_0\in\CI(S^*X)\tn{ near }L,\ \beta_0>0\tn{ at }L, \\
      \rham_p\tau&=-\beta_0\wt\beta\tau,\quad \wt\beta\in\CI(S^*X)\tn{ near }L,\ \wt\beta>0\tn{ at }L;
    \end{split}
    \]
    For simplicity, let us assume
    \[
      \wt\beta \equiv \beta\tn{ is constant};
    \]
    this is satisfied in our applications.
  \item there exists a quadratic defining function $\rho_0$ of $L$ within the characteristic set $\pa\Sigma\subset\Sb_X^*M$ over $X$ such that
  \begin{equation}
  \label{EqRadialQuadr}
    \rham_p\rho_0\geq\beta_q\rho_0
  \end{equation}
  in a neighborhood of $L$ within $\pa\Sigma$, where $\beta_q\in\CI(\Sb^*X)$, $\beta_q>0$ at $L$;
  \item we have $\cP-\cP^*\in\Psib^{m-2}(M)$, i.e.\ $\cP$ is symmetric modulo an operator \emph{two} orders lower.
\end{enumerate}

We refer back to Figure~\ref{FigPfSolFlow} for an illustration. We recall the following result on the propagation of singularities at such (normal) saddles of the null-bicharacteristic flow:
\begin{prop}
\label{PropRadialRecall}
  \cite[Proposition~2.1]{HintzVasySemilinear}. Let $s,r\in\R$.
  \begin{enumerate}
    \item \label{ItRadialRecallInto} Suppose $s>s_0>(m-1)/2+\beta r$. There exist $B,B',E,G\in\Psib^0(M)$, microlocalized in any fixed neighborhood of $L$, with $B,B'$ and $G$ elliptic at $L$, and with $\WFb'(E)\cap\Sb^*_X M=\emptyset$, such that the estimate
      \[
        \|B u\|_{\Hb^{s,r}} \leq C(\|G \cP u\|_{\Hb^{s-m+1,r}} + \|E u\|_{\Hb^{s,r}} + \|B' u\|_{\Hb^{s_0,r}} + \|u\|_{\Hb^{-N,r}}).
      \]
      holds for all $u$ for which the right hand side is finite. Here, $N\in\R$ is arbitrary but fixed.
    \item \label{ItRadialRecallOut} Suppose $s<(m-1)/2+\beta r$. There exist $B,E,G\in\Psib^0(M)$, microlocalized in any fixed neighborhood of $L$, with $B$ and $G$ elliptic at $L$, and with $\WFb'(E)\cap\cL=\emptyset$, such that the estimate
      \[
        \|B u\|_{\Hb^{s,r}} \leq C(\|G \cP u\|_{\Hb^{s-m+1,r}} + \|E u\|_{\Hb^{s,r}} + \|u\|_{\Hb^{-N,r}}).
      \]
      holds for any fixed $N\in\R$.
  \end{enumerate}
\end{prop}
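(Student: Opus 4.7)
The plan is to prove both estimates by the standard positive commutator technique at the saddle point $L$, exploiting the symmetry hypothesis $\cP - \cP^* \in \Psib^{m-2}(M)$ to treat $\cP$ as self-adjoint at the principal and subprincipal level. First, I fix a quantization $A \in \Psib^{s-(m-1)/2}(M)$ of a real-valued commutant symbol of the form
$$
  a = \phi_0(\rho_0)\,\phi_1(\wh\rho)\,\phi_2(\tau)\,\wh\rho^{-s+(m-1)/2}\,\tau^{-r},
$$
microlocalized in a small conic neighborhood of $L$; the sign and monotonicity of the smooth cutoffs $\phi_0,\phi_1,\phi_2$ will depend on the case at hand. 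The symmetry hypothesis makes $i[\cP, A^*A]$ self-adjoint modulo two orders lower, with principal symbol $\ham_p a^2 = 2a\,\ham_p a$.

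Using the hypotheses $\rham_p \wh\rho = \beta_0\wh\rho$, $\rham_p \tau = -\beta\beta_0\tau$, and $\rham_p \rho_0 \geq \beta_q \rho_0$, a direct computation shows that, up to an overall positive factor, the principal symbol of $i[\cP, A^*A]$ takes the schematic form
$$
  -2\Bigl(s - \tfrac{m-1}{2} - \beta r\Bigr)(\phi_0\phi_1\phi_2)^2 + 2\phi_0\phi_0'\,\beta_0^{-1}\rham_p\rho_0\,(\phi_1\phi_2)^2 + (\text{derivatives of }\phi_1,\phi_2).
$$
The sign of the first coefficient is precisely the threshold condition distinguishing the two cases.

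In the below-threshold case \itref{ItRadialRecallOut}, the first coefficient is positive; choosing $\phi_0 \phi_0' \leq 0$ makes the second term non-negative via $\rham_p \rho_0 \geq 0$, so both dominant terms contribute with a uniform positive sign throughout a neighborhood of $L$. The derivative of $\phi_1$ is supported away from the fiber-radial direction and feeds into the $E$ term with $\WFb'(E)\cap\cL = \emptyset$, while the derivative of $\phi_2$ is supported away from $X$ and is absorbed into $\|u\|_{\Hb^{-N,r}}$. In the above-threshold case \itref{ItRadialRecallInto}, the first coefficient has the wrong sign, so one instead arranges $\phi_0 \phi_0' \geq 0$: the source hypothesis $\rham_p \rho_0 \geq \beta_q \rho_0$ then produces a positive contribution that dominates outside a small $\rho_0$-neighborhood of $L$, at the cost of a region of indefinite sign where $\phi_0'$ is supported; there $\rho_0$ is bounded below, and the commutator term is absorbed into $\|B' u\|_{\Hb^{s_0,r}}$ provided $s_0 > (m-1)/2 + \beta r$.

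Applying the sharp G\aa rding inequality to the non-negative part of the commutator symbol and pairing
$$
  \langle i[\cP, A^*A] u, u\rangle = 2\,\Im \langle A u, A \cP u\rangle,
$$
followed by Cauchy--Schwarz and a standard regularization $A \mapsto A \Lambda_\epsilon$ with $\Lambda_\epsilon$ a uniformly bounded approximate identity in $\Psib^0(M)$, yields the claimed estimates. The main obstacle is the bookkeeping of subprincipal and cutoff error terms: one must verify that contributions from $\cP - \cP^*$ and from derivatives falling on the cutoffs $\phi_1,\phi_2$ are either two orders below the main commutator, hence absorbable in $\|u\|_{\Hb^{-N,r}}$, or microlocally supported away from $\cL$, hence in the $E$ term. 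The assumption $\cP - \cP^* \in \Psib^{m-2}$, rather than merely $\Psib^{m-1}$, is essential, since it guarantees the subprincipal correction to the commutator symbol is genuinely two orders below the main term and does not interfere with the threshold calculation.
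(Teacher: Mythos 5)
The paper does not prove this proposition; it is cited from \cite[Proposition~2.1]{HintzVasySemilinear}. The closest argument actually given in the paper is the proof of Theorem~\ref{ThmRadialLogOut}, which refines case~\itref{ItRadialRecallOut} at the threshold $s=(m-1)/2+\beta r$. Your positive-commutator strategy and the form of the commutant match that proof, so the overall approach is not in question.

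However, your sign analysis is wrong in a way that makes the argument inconsistent with the very statement you are proving. With $\phi_0$ a cutoff that is $1$ near $\rho_0=0$ and decreasing (so $\phi_0'\leq 0$), the source hypothesis $\rham_p\rho_0\geq\beta_q\rho_0\geq 0$ forces $2\phi_0\phi_0'\beta_0^{-1}(\rham_p\rho_0)(\phi_1\phi_2)^2\leq 0$; since the weight term is positive below threshold, the $\phi_0'$ contribution has the \emph{opposite} sign from the main term, not the same sign as you claim. This term is precisely the a priori control term and is what produces $E$ in case~\itref{ItRadialRecallOut}: its microsupport sits in a punctured neighborhood of $L$ inside the boundary characteristic set, which is disjoint from $\cL$, matching $\WFb'(E)\cap\cL=\emptyset$ (see the identification of $(\rham_p\rho_0)\phi_0'$ as ``the a priori control term'' in the proof of Theorem~\ref{ThmRadialLogOut}). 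Your assignment of $E$ to $\phi_1'(\wh\rho)$ cannot be right, since $\supp\phi_1'$ lies at bounded fiber radius, so that term is of order $-\infty$ and goes into $\|u\|_{\Hb^{-N,r}}$; and the $\tau$-cutoff derivative $\phi_2'$, which you relegate to the residual, in fact contributes with the \emph{same} sign as the main term (the analogue of the favourable $b_{2,t}$ term in Theorem~\ref{ThmRadialLogOut}). Your account of case~\itref{ItRadialRecallInto} suffers symmetrically: you cannot have $\phi_0\phi_0'\geq 0$ near $\rho_0=0$ while keeping $\phi_0$ elliptic at $L$; above threshold the $\phi_0'$ term already has the good sign, the $E$ term with $\WFb'(E)\cap\Sb^*_X M=\emptyset$ comes from $\phi_2'$, and $\|B'u\|_{\Hb^{s_0,r}}$ arises from the regularizer, not from $\phi_0'$. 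Since both estimates explicitly involve $E$ (and case~\itref{ItRadialRecallInto} also $B'$), correctly identifying the sign and microsupport of each cutoff-derivative term is the heart of the proof, not a bookkeeping step to defer.
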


Thus, in \itref{ItRadialRecallInto}, we can propagate $\Hb^{s,r}$ regularity from $\tau>0$ into $L$, assuming a priori $\Hb^{s_0,r}$ regularity at $L$. In both cases in the proposition, we do not gain (or lose) decay: the weight $r$ in the function spaces is the same for a priori assumptions and conclusions. See Figure~\ref{FigRadialAway} for an illustration of \itref{ItRadialRecallOut}.

Both estimates in Proposition~\ref{PropRadialRecall} hold true if one adds logarithmic regularity and logarithmic weights, i.e.\ if one replaces $s$ and $r$ by $s+\ell\log$ and $r+j\log$, respectively, with $\ell,j\in\R$.

\begin{figure}[!ht]
  \centering
  \includegraphics{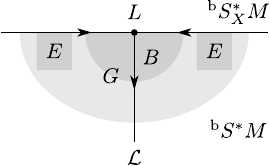}
  \caption{Setup of the radial point estimate below the threshold regularity, propagating regularity within $\Sb^*_X M$ from $\WFb'(E)$ into $\WFb'(B)$ and out into the interior of $\Sb^*M$.}
\label{FigRadialAway}
\end{figure}

\begin{rmk}
  One can allow $\cP-\cP^*\in\Psib^{m-1}(M)$ to be non-trivial with
  \[
    \frac{1}{2i}\sigma_{\bop,m-1}(\cP-\cP^*) = \beta_0\wh\beta\wh\rho^{-(m-1)}
  \]
  which shifts the threshold regularity by $\wh\beta$ if the latter is constant. In general, the threshold regularity is $(m-1)/2+\sup_L(\wt\beta r+\wh\beta)$ for part \itref{ItRadialRecallInto} and $(m-1)/2+\inf_L(\wt\beta r+\wh\beta)$ for part \itref{ItRadialRecallOut}.
\end{rmk}

In the borderline case, let us consider the propagation from the boundary through $L$ into the interior of $M$, where we will refine part \itref{ItRadialRecallOut} of the above proposition, propagating regularity through the saddle point into the interior of $M$:

\begin{thm}
\label{ThmRadialLogOut}
  Let $r\in\R$, and let $s=(m-1)/2+\beta r$ be the borderline regularity in the notation of Proposition~\ref{PropRadialRecall}, and let $\ell\geq 0$. There exist $B,E,G\in\Psib^0(M)$, microlocalized in any fixed neighborhood of $L$, with $B$ and $G$ elliptic at $L$, and with $\WFb'(E)\cap\cL=\emptyset$, such that for all $\alpha\in[0,1]$, the estimate
  \begin{align*}
    \|B&u\|_{\Hb^{s+\alpha\ell\log,r+(1-\alpha)\ell\log}} \\
      &\leq C\bigl(\|G \cP u\|_{\Hb^{s-m+1+(\ell+1)\log,r}}+\|G \cP u\|_{\Hb^{s-m+1,r+(\ell+1)\log}} \\
      &\qquad + \|E u\|_{\Hb^{s+(\ell+1/2)\log,r}}+\|E u\|_{\Hb^{s,r+(\ell+1/2)\log}} + \|u\|_{\Hb^{-N,r+(\ell+1/2)\log}}\bigr)
  \end{align*}
  holds for all $u$ for which the right hand side is finite; here $N\in\R$ are arbitrary but fixed. Moreover, a neighborhood of $L$ intersected with $\tau>0$ is disjoint from $\WF^{s+(\ell+1/2)\log}(u)$.
\end{thm}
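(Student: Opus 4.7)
The plan is a positive commutator argument parallel to the proof of Proposition~\ref{PropRadialRecall}\itref{ItRadialRecallOut}, but with a commutant refined by a logarithmic factor that restores the coercivity lost at the threshold $s=(m-1)/2+\beta r$. I would take a commutant $A=\Op(a)$ with principal symbol
\[
  a = \phi\,\wh\rho^{-(s-(m-1)/2)}\tau^{-r}\log^{\ell+1/2}(\tau^{-v}\wh\rho^{-w}),
\]
where $\phi$ is a smooth cutoff supported in a small conic neighborhood of $L$ in $\Sb^*M$ (with an additional cutoff in $\tau$), and $v,w>0$ are constants to be chosen. The mixed logarithm in the two variables $\tau$ and $\wh\rho$ is crucial: after the commutator estimate produces $L^2_\bop$-type control of an operator whose symbol has one $\log$ less than $a$, it is precisely Lemma~\ref{LemmaSobolevInterpolation} applied to this symbol that packages the conclusion into the full interpolated family $\Hb^{s+\alpha\ell\log,r+(1-\alpha)\ell\log}$, $\alpha\in[0,1]$.

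The core calculation, working modulo lower-order terms in the log-b-calculus of \S\ref{SubsecSobolev}, gives $\sigma_\bop(i[\cP,A^*A]) = -2a\,\rham_p a + \tn{l.o.t.}$ with
\[
  \rham_p\log a = \rham_p\log\phi + \beta_0\bigl((m-1)/2+\beta r - s\bigr) + \frac{(\ell+1/2)\beta_0(v\beta-w)}{\log(\tau^{-v}\wh\rho^{-w})}.
\]
At the threshold, the middle term vanishes; the $\rham_p\log\phi$ piece is supported on the derivatives of the cutoffs and contributes the $\|Eu\|$ error (using \eqref{EqRadialQuadr} together with the standard quadratic-defining-function trick to keep the sign of the cutoff-derivative term favorable on the a priori side). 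The last piece is the principal contribution: choosing $v,w$ so that $(\ell+1/2)(v\beta-w)<0$ yields $-\rham_p a^2 = c^2 + e$, where $c$ is a positive multiple of $\phi\wh\rho^{-(s-(m-1)/2)}\tau^{-r}\log^\ell(\tau^{-v}\wh\rho^{-w})$ --- the threshold symbol weighted by exactly one $\log$ less than $a$ --- and $e$ is supported in the cutoff-derivative region.

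Pairing the resulting operator identity against $u$, after the standard regularization $a\rightsquigarrow a_\delta = (1+\delta\wh\rho^{-1})^{-\eps}a$ to legitimize the pairing (taking $\delta\downarrow 0$ at the end), and applying Cauchy--Schwarz to $\langle A^*Au,\cP u\rangle$, one obtains
\[
  \|Cu\|_{L^2_\bop}^2 \leq C_0\bigl(\|A\cP u\|_{L^2_\bop}^2 + \|Eu\|_{\Hb^{s+(\ell+1/2)\log,r}}^2 + \|u\|_{\Hb^{-N,r+(\ell+1/2)\log}}^2\bigr).
\]
The first term on the right is dominated by the two $\Hb^{s-m+1+(\ell+1)\log,r}$ and $\Hb^{s-m+1,r+(\ell+1)\log}$ norms of $G\cP u$ from the statement (one log \emph{and} one full order worse than $A$) via Lemma~\ref{LemmaSobolevInterpolation}\itref{ItSobolevInterpolation2}; the $\|Eu\|$ term is replaced by the mixed version in the theorem by the same lemma. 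Converting the $L^2_\bop$ bound on $Cu$ back to the advertised family of conclusions for $\alpha\in[0,1]$ is then Lemma~\ref{LemmaSobolevInterpolation}\itref{ItSobolevInterpolation1}, with the a priori weighted bound $u\in\Hb^{-N,r+(\ell+1/2)\log}$ providing the `$u\in\Hbloc^{-\infty,r+\ell\log}$' hypothesis needed there. The final assertion --- disjointness of $\WF^{s+(\ell+1/2)\log}(u)$ from a neighborhood of $L$ intersected with $\tau>0$ --- follows from log-weighted real-principal-type propagation backward along $\rham_p$ from $\WFb'(E)$, available since $\cL$ is transversal to $X=\pa M$.

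The main obstacle, as I see it, is the symbol bookkeeping within the log-refined b-calculus: one has to track that the lower-order terms in the expansion of $[\cP,A^*A]$ are only a $\log$ better (not one full order better) than the principal part, since $S^{-1}\not\subset S^{-\log}$; conjugate carefully by $\log^{-j}(x^{-1})$ factors via Lemma~\ref{LemmaSobolevConj} to rewrite the weighted commutant as a standard element of $\Psib^{\cdot+\cdot\log}$; and verify uniformity of all estimates as $\delta\downarrow 0$. Because the cancellation at the threshold is exactly one order in $\rham_p a$, every $\log$ loss in the symbol expansion is borderline, and must be absorbed by the a priori term $\|u\|_{\Hb^{-N,r+(\ell+1/2)\log}}$ on the right.
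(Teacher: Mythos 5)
Your proposal takes exactly the paper's route: a positive commutator argument with the commutant $\check a=\wh\rho^{-(s-(m-1)/2)}\tau^{-r}\log^{\ell+1/2}(\wh\rho^{-\beta}\tau^{-2})\phi_0(\rho_0)\phi_1(\tau)\phi(\sfp)$ (i.e.\ $v=2$, $w=\beta$ in your notation), with the threshold cancellation leaving the logarithmic weight to furnish the coercivity, a regularizer in $\wh\rho$, and Lemma~\ref{LemmaSobolevInterpolation} to convert the resulting $L^2_\bop$ control of the main operator into the interpolated family of estimates and to state the error and forcing norms in mixed form. However, two of your steps would fail as written.

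First, $\sigma_\bop(i[\cP,A^*A])=\ham_p(a^2)=\wh\rho^{1-m}\rham_p(a^2)$, with a $+$ sign given the paper's conventions $\rham_p\wh\rho=\beta_0\wh\rho$, $\rham_p\tau=-\beta_0\beta\tau$; so the coercivity requires $v\beta-w>0$ (not $<0$), and, crucially, the missing factor $\wh\rho^{1-m}$ means the main square has symbol $c\propto\phi\,\wh\rho^{-s}\tau^{-r}\log^{\ell}(\wh\rho^{-\beta}\tau^{-2})$, not $\wh\rho^{-(s-(m-1)/2)}\tau^{-r}\log^\ell$. This matters: with your $c$, Lemma~\ref{LemmaSobolevInterpolation}\itref{ItSobolevInterpolation1} returns regularity $s-(m-1)/2+\alpha\ell\log$, which is $(m-1)/2$ short of the theorem. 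Second, the terminal statement cannot be obtained by propagating from $\WFb'(E)$: that set lies in $\Sb^*_X M$, an invariant set of the $\rham_p$-flow, so backward bicharacteristics through points with $\tau>0$ never reach it. In the paper, the $\tau>0$ control near $L$ comes from the $\tau$-cutoff-derivative term $-\beta_0\beta\tau\phi_1'$ in \eqref{EqRadialCommHam}, which produces the operator $B_{2,t}$ in \eqref{EqRadialCommOp} with $b_{2,t}$ uniformly elliptic of order $s+(\ell+1/2)\log$ on a strip $\{c_1<\tau<c_2\}$ near $L$; the remainder of a punctured neighborhood is then supplied by real-principal-type propagation into that strip, not from $\WFb'(E)$. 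Finally, in the Cauchy--Schwarz step one must peel off the factor $\Lambda_2^{\half\log}\Lambda^{-(m-1)/2}$ from $\check A\cP u$ and absorb the other half via the $\eta$-term built into \eqref{EqRadialCommRegHam}; that is where the $\Hb^{s-m+1+(\ell+1)\log,r}$ and $\Hb^{s-m+1,r+(\ell+1)\log}$ norms of $G\cP u$ come from, not from $\|A\cP u\|_{L^2_\bop}$ as you wrote.
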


Thus, we can gain logarithmic regularity at the cost of giving up logarithmic decay in $\tau$. In the last statement, note that away from the boundary, b-wave front set (with any weight) and ordinary wave front set coincide. The estimate on $u$ at the radial set yields $1$ ordinary derivative less, as usual for hyperbolic propagation, and in addition loses one logarithmic derivative as well as one logarithmic weight relative to $\cP u$, and we lose $1/2$ a logarithm in regularity and decay relative to the regularity of $u$ in a punctured neighborhood of $L$.

\begin{proof}[Proof of Theorem~\ref{ThmRadialLogOut}.]
  We may assume that $\wh\rho<1/2$ everywhere. Let $\sfp:=\wh\rho^m p$ be the rescaled principal symbol, and let $\phi_0,\phi_1,\phi\in\CIc(\R)$ be cutoffs, identically equal to $1$ near $0$, and with $\phi_j'\leq 0$ and $\sqrt{-\phi_j'\phi_j}\in\CI$ on $[0,\infty)$ for $j=0,1$. We moreover assume $\phi_1(\tau)\equiv 0$ for $\tau\geq 1/2$. With regularization in the regularity parameter (i.e.\ the weighting in $\wh\rho$) to be added later, we use the commutant
  \begin{equation}
  \label{EqRadialLogOutComm}
    \check a = \wh\rho^{-s+(m-1)/2}\tau^{-r} \log^{\ell+1/2}(\wh\rho^{-\beta}\tau^{-2}) \phi_0(\rho_0)\phi_1(\tau)\phi(\sfp).
  \end{equation}
  Notice here that $\rham_p(\wh\rho^{-\beta}\tau^{-1})=0$, so the reason for the exponent $2$ of $\tau^{-1}$ is that it gives the correct sign of the Hamilton derivative (any exponent $>1$ would work as well); indeed, we have
  \[
    \rham_p\log^{\ell+1/2}(\wh\rho^{-\beta}\tau^{-2}) = \beta_0\beta(\ell+1/2)\log^{\ell-1/2}(\wh\rho^{-\beta}\tau^{-2}).
  \]
  Moreover, writing
  \[
    \log(\wh\rho^{-\beta}\tau^{-2}) = \beta\log\wh\rho^{-1} + 2\log\tau^{-1},
  \]
  we see that a commutator argument using $\check a$ will allow us to logarithmically improve regularity at the cost of giving up a logarithmic weight in $\tau$; see also the more precise discussion in Lemma~\ref{LemmaSobolevInterpolation}.
  
  Concretely, we compute
  \begin{equation}
  \label{EqRadialCommHam}
  \begin{split}
    \check a&\ham_p\check a = \wh\rho^{-2s}\tau^{-2r}\log^{2\ell}(\wh\rho^{-\beta}\tau^{-2})\phi_0\phi_1\phi \\
      &\times \Bigl( (\ell+1/2)\beta_0\beta\phi_0\phi_1\phi \\
      &\qquad + \log(\wh\rho^{-\beta}\tau^{-2})\bigl((\rham_p\rho_0)\phi_0'\phi_1\phi - \beta_0\beta\tau \phi_1'\phi_0\phi + (\rham_p\sfp)\phi'\phi_0\phi_1\bigr)\Bigr).
  \end{split}
  \end{equation}
  The main term, which is non-zero at $L$, is the first term in the big parenthesis on the right hand side. In view of \eqref{EqRadialQuadr}, the first term in the parenthesis in the last line is non-positive; this is the a priori control term, requiring control of $u$ microlocally in a punctured neighborhood of $L$ in the boundary. The second term is positive --- so has the same sign as the main term --- on $\supp\phi_1'$, which is disjoint from the boundary $\tau=0$, and will allow us to conclude regularity there, while the third term is supported off the characteristic set and is thus irrelevant.

  In order to regularize the argument, we use $\varphi_t(\zeta)=(1+t\zeta)^{-N}$, $N>0$, and compute $\rham_p\varphi_t(\wh\rho^{-1}) = N\beta_0\wt\varphi_t\varphi_t$, $\wt\varphi_t(\zeta)=t\zeta(1+t\zeta)^{-1}$. Letting
  \[
    \check a_t = \varphi_t(\wh\rho^{-1})\check a,\quad t>0,
  \]
  we thus obtain, fixing $0<\eta<(\ell+1/2)\beta_0\beta$,
  \begin{equation}
  \label{EqRadialCommRegHam}
    \check a_t\ham_p\check a_t = \eta\log^{-1}(\wh\rho^{-\beta}\tau^{-2})\wh\rho^{-(m-1)}\check a_t^2 + b_t^2 + b_{1,t}^2 + b_{2,t}^2 - e_t^2 + f_t p,
  \end{equation}
  where
  \begin{align*}
    b_t &= \varphi_t\wh\rho^{-s}\tau^{-r}\log^\ell(\wh\rho^{-\beta}\tau^{-2})\phi_0\phi_1\phi\sqrt{(\ell+1/2)\beta_0\beta-\eta}, \\
    b_{1,t} &= \varphi_t\wh\rho^{-s}\tau^{-r}\log^{\ell+1/2}(\wh\rho^{-\beta}\tau^{-2})\phi_0\phi_1\phi\sqrt{N\beta_0\wt\varphi_t}, \\
    b_{2,t} &= \varphi_t\wh\rho^{-s}\tau^{-r}\log^{\ell+1/2}(\wh\rho^{-\beta}\tau^{-2})\phi_0\phi\sqrt{-\beta_0\beta\tau\phi_1'\phi_1}, \\
    e_t &= \varphi_t\wh\rho^{-s}\tau^{-r}\log^{\ell+1/2}(\wh\rho^{-\beta}\tau^{-2})\phi_1\phi\sqrt{-(\rham_p\rho_0)\phi_0'\phi_0}, \\
    f_t &= \varphi_t^2\wh\rho^{-2s+m}\tau^{-r}\log^{2\ell+1}(\wh\rho^{-\beta}\tau^{-2})(\rham_p\sfp)\phi'\phi\phi_0^2\phi_1^2\sfp^{-1}.
  \end{align*}
  Notice that the regularizer contributes the term $b_{1,t}$, which has the same sign as the main term in the unregularized calculation \eqref{EqRadialCommHam}. Denote by $\check A_t,B_t,B_{1,t},B_{2,t},E_t$ and $F_t$ quantizations of the corresponding lower order symbols; we arrange that $\check A_t$ has uniform wave front set contained in $\supp\check a_0$, is uniformly bounded in the space $\Psib^{s-(m-1)/2+(\ell+1/2)\log} + \log^{\ell+1/2}(\tau^{-1})\Psi^{s-(m-1)/2}$ (cf.\ \eqref{EqSobolevInterpolationPf}), and converges to $\check A_0$ in $\Psib^{s-(m-1)/2+\eps}+\log^{\ell+1/2}(\tau^{-1})\Psi^{s-(m-1)/2+\eps}$ for all $\eps>0$ as $t\to 0$; similarly for $B_t,B_{1,t},B_{2,t},E_t$ and $F_t$. Further, let $\Lambda^\sigma$ and $\Lambda_2^{\rho\log}$ be quantizations of $\wh\rho^{-\sigma}$ and $\log^\rho(\wh\rho^{-\beta}\tau^{-2})$, respectively, where $\sigma,\rho\in\R$. We then obtain
  \begin{equation}
  \label{EqRadialCommOp}
  \begin{split}
    \frac{i}{2}[\cP,\check A_t^*\check A_t] &= \eta(\Lambda_2^{-\half\log}\Lambda^{(m-1)/2}\check A_t)^*(\Lambda_2^{-\half\log}\Lambda^{(m-1)/2}\check A_t) \\
      &\qquad + B_t^*B_t + B_{1,t}^*B_{1,t} + B_{2,t}^*B_{2,t} - E_t^*E_t + F_t \cP + L_t,
  \end{split}
  \end{equation}
  where $L_t\in \tau^{-2r}(\log\tau^{-1})^{2\ell+1}\circ\Psib^{2s-1+(2\ell+1)\log}$ uniformly (use \eqref{EqSobolevDoubleSymbolEst} to see this), with order $\Psib^{2(s-N)-1+\eps}$, $\eps>0$, for $t>0$. When computing the pairing
  \begin{equation}
  \label{EqRadialCommOp2}
    \Re\Big\la\frac{i}{2}[\cP,\check A_t^*\check A_t]u,u\Big\ra = -\frac{1}{2}\Im\la\check A_t u,\check A_t \cP\ra,
  \end{equation}
  regularity is not an issue if we take $N\gg 0$ large enough, so we only need to observe that the weights allow for the integration by parts to be performed; this follows as $u$ is assumed to be in the space $u\in\Hb^{-\infty,r+(\ell+1/2)\log}$. Using \eqref{EqRadialCommOp}, integration by parts to rewrite the left hand side of \eqref{EqRadialCommOp2}, and Cauchy--Schwarz similarly to the proof of Theorem~\ref{ThmRadialLogOut} shows that we obtain a uniform bound on $B_t u,B_{1,t}u,B_{2,t}u$ in $L^2$ provided $\WFb^{s+(\ell+1/2)\log,r}(u)\cup\WFb^{s,r+(\ell+1/2)\log}(u)$ is disjoint from a neighborhood of $\WFb'(E_0)$ and $\WFb^{s-m+1+(\ell+1)\log,r}(\cP u)\cup\WFb^{s-m+1,r+(\ell+1)\log}(\cP u)$ is disjoint from a neighborhood of $\WFb'(\check A_0)$. The fact that we only need to exclude the `extreme' wave front sets, where we either consider maximal logarithmic regularity but no logarithmic weight or no logarithmic regularity but maximal logarithmic weight, to control $E_t u$ and $\check A_t \cP u$ relies Lemma~\ref{LemmaSobolevInterpolation}, see in particular \eqref{EqSobolevInterpolation2}. Lastly, the term $\la L_t u,u\ra$ is controlled by Proposition~\ref{PropRadialRecall} \itref{ItRadialRecallOut} with logarithmic weights, which gives $u\in\Hb^{s-\eps,r+(\ell+1/2)\log}$ at $L$ for all $\eps>0$.

  Now, $b_t$ dominates $b^{(\alpha)}:=\log^{\alpha\ell}(\wh\rho^{-1})\log^{(1-\alpha)\ell}(\tau^{-1})$ for $\alpha\in[0,1]$, see \eqref{EqSobolevInterpolationPf}, in the sense that $b_t/b^{(\alpha)}$ is bounded in $S^0$. On the other hand, $b_{2,t}$ is uniformly elliptic in the class $\Psib^{s+(\ell+1/2)\log}$ near $L$, but away from $\tau=0$. (The seemingly better term $b_{1,t}$ does not give any control in the limit $t\to 0$ because $b_{1,t}\to 0$ pointwise as $t\to 0$.) A standard functional analytic argument using the weak compactness of the unit ball in $L^2$, see e.g.\ \cite[Proposition 7 and \S9]{MelroseEuclideanSpectralTheory}, concludes the proof of the theorem.
\end{proof}

\begin{rmk}
\label{RmkRadialConormal}
  In the dilation-invariant (in $\tau$; translation-invariant in $-\log\tau$) setting, one can prove a result on the propagation of conormal (to $\cL$) regularity relative to b-Sobolev spaces strictly below the threshold regularity using positive commutator arguments similar to the ones used in \cite{HassellMelroseVasySymbolicOrderZero,BaskinVasyWunschRadMink,HaberVasyPropagation}; that is, one can prove iterative regularity of $u$ under the application of $Q\in\Psib^1$ with principal symbol $q$ vanishing on $\Nb^*\cL$. Such arguments rely on approximate commutation properties, e.g.\ of the type that $\ham_p q$ vanishes quadratically on the unstable manifold $\cL$ of $L$. However, at the threshold regularity and working with logarithmic regularity, one encounters the following problem, in the notation of the above proof: The main term of the commutator, using a commutant of the form $\check a q$, arises from differentiating $\check a$ along $\ham_p$, and provides control of $\ell$ logarithmic derivatives of $Q u$, but the term from differentiating $q$ still has $(\ell+1/2)$ logarithmic derivatives and is non-zero in a punctured neighborhood of $L$ within $\Sigma$, and therefore cannot be controlled by the main term.
  
  Requiring stronger commutation properties would easily resolve this issue. However, we use exact commutation properties in \S\ref{SubsecPfRad} for brevity and simplicity.
\end{rmk}

\appendix
\section{Proof of Lemma~\ref{LemmaPfSolAbsence}}
\label{SecPfSolAbs}

In this appendix, we will denote by $\cW$ the operator $\cW=\Box_{\wt g}-V-i\cQ$ in a neighborhood $r_0-\delta\leq r\leq r_1+\delta$. (Thus, $\cW$ is a restriction of the operator called $\cP$ in the rest of the paper.) We recall from \S\ref{SubsecPfSol} that the complex absorbing operator $\cQ$ has Schwartz kernel supported in $\ol\cU\times\ol\cU$, where $\cU=\{r_{\cQ,-}<r<r_{\cQ,+},\tau\leq e^{-t_{*,0}-1}\}$.

We first place the backwards problem for $\cW$ into context: Define $H_{I,0}$ and $H_F$ as in \eqref{EqPfSolWhiteHoleSurf}, and let further
\[
  H_{I,1} := \{ r=r_1+\delta,\ t_*\geq t_{*,0} \};
\]
and let $\Omega_{01}^\circ=\{r_0-\delta\leq r\leq r_1+\delta,\ t_*\geq t_{*,0}\}$ be the domain bounded by $H_{I,0},H_F$ and $H_{I,1}$. Let us denote $X_{01}=\{r_0-\delta\leq r\leq r_1+\delta\}$ a slice of $\Omega_{01}^\circ$ of constant $t_*$. A better way of viewing $X_{01}$ is as the boundary at infinity of the compactification $\Omega_{01}$ of $\Omega_{01}^\circ$ at future infinity; see \eqref{EqPfSolComp} for a related construction. Thus, letting $\tau=e^{-t_*}$, $\Omega_{01}$ is the union of $\Omega_{01}^\circ$ and its boundary at future infinity $X_{01}\cong\{\tau=0\}\times X_{01}$. See Figure~\ref{FigArtificialExt}.

\begin{figure}[!ht]
  \centering
  \includegraphics{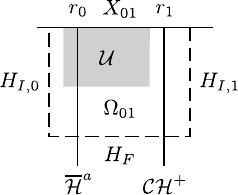}
  \caption{A neighborhood $\Omega_{01}$ of the artificial exterior region $r_0\leq r\leq r_1$, compactified at infinity, which is a submanifold with corners of the ambient spacetime $\wt M$. The shaded region $\cU$ is the elliptic set of the complex absorbing operator $\cQ$.}
\label{FigArtificialExt}
\end{figure}

One can then show using the methods of \cite{VasyMicroKerrdS,HintzVasySemilinear} that $\cW$ defines a Fredholm map
\[
  \cW \colon \cX_\bop^{s,\alpha} \to \cY_\bop^{s-1,\alpha}:=\Hb^{s-1,\alpha}(\Omega_{01})^{-,\bullet},
\]
where
\[
  \cX_\bop^{s,\alpha} = \{ u\in\Hb^{s,\alpha}(\Omega_{01})^{-,\bullet} \colon \cW u\in\cY_\bop^{s-1,\alpha} \},
\]
for all $\alpha\in\R$ outside of a discrete set of forbidden weights, where $s<1/2+\alpha/\kappa_1$, $\kappa_1$ the surface gravity of the Cauchy horizon at $r=r_1$. (The latter comes from estimates at saddle points for the null-geodesic flow in the b-cotangent bundle, see \cite[Proposition~2.1]{HintzVasySemilinear}.) Here, $\Hb^{s,\alpha}(\Omega_{01})^{-,\bullet}$ denotes the space of restrictions to $\Omega_{01}$ of elements of $\Hb^{s,\alpha}(\wt M)$ (see \eqref{EqPfSolComp} for the definition of $\wt M$) which are supported in $\{r_0-\delta\leq r\leq r_1+\delta\}$; there are no support restrictions at $H_F$. Thus, elements of $\Hb^{s,\alpha}(\Omega_{01}^\circ)^{-,\bullet}$ are supported distributions at $H_{I,0}\cup H_{I,1}$ and extendible distributions at $H_F$ in the sense of \cite[Appendix~B]{HormanderAnalysisPDE}.

The discrete set of forbidden weights consists of all $\alpha\in\R$ with are equal to $-\Im\sigma$ for a resonance $\sigma\in\C$ of $\cW$, i.e.\ a pole of the meromorphic continuation of
\[
  \wh\cW(\sigma)^{-1} \colon H^{s-1}(X_{01})^\bullet \to H^s(X_{01})^\bullet,\quad s<1/2-\Im\sigma/\kappa_1,
\]
where the $\bullet$ indicates distributions with supported character at $r=r_0-\delta$ and $r=r_1+\delta$. The key step in the proof of meromorphy is that
\begin{equation}
\label{EqPfSolAbsAnalFred}
  \wh\cW(\sigma) \colon \cX^s \to \cY^{s-1}:=H^{s-1}(X_{01})^\bullet,
\end{equation}
with $\cX^s=\{u\in H^s(X_{01})^\bullet\colon\wh\cW(\sigma)u\in\cY^{s-1}\}$, is an analytic family of Fredholm operators for $s<1/2-\Im\sigma/\kappa_1$; the regularity requirement comes from the radial point threshold at $r=r_1$, see \cite[Propositions~2.3 and 2.4]{VasyMicroKerrdS}. (The space $\cX^s$ here is independent of $\sigma$, since the principal symbol of $\wh\cW(\sigma)$ is.) Near a resonance $\sigma_0$, one can write
\begin{equation}
\label{EqPfSolAbsLaurent}
  \wh\cW(\sigma)^{-1} = \sum_{k=1}^{k_0}(\sigma-\sigma_0)^{-k}A_{-k} + A_0(\sigma),\quad A_{-k_0}\neq 0,
\end{equation}
where $A_{-k}=\sum_{\ell=1}^{\ell_k}\la\cdot,\psi_{k\ell}\ra\phi_{k\ell}$, and $A_0$ is holomorphic near $\sigma_0$; the integer $k_0$ is the order of the pole. All $A_j$, $j=-k_0,\ldots,0$, are bounded maps $\cY^{s-1}\to\cX^s$. We may assume that $\{\psi_{k\ell}\}_{\ell=1,\ldots,\ell_k}$ and $\{\phi_{k\ell}\}_{\ell=1,\ldots,\ell_k}$ are both linearly independent sets for each $k=1,\ldots,k_0$.

\begin{lemma}
\label{LemmaPfSolAbsResStates}
  Suppose $\sigma_0$ is a \emph{real} resonance. Then in the expansion \eqref{EqPfSolAbsLaurent}, we have $\ran A_{-k}\subset H^{1/2-0}(X_{01})^\bullet$, and moreover every element in $\ran A_{-k}$ has support in $r_{\cQ,-}\leq r\leq r_1$. Thus, all $\phi_{k\ell}$ are of this type.
  
  Moreover, all $\psi_{k\ell}\in\CI(X_{01})$.
\end{lemma}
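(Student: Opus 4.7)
\emph{Regularity of $\ran A_{-k}$.} The Fredholm setup \eqref{EqPfSolAbsAnalFred} together with the Laurent expansion \eqref{EqPfSolAbsLaurent} shows that each $A_{-k}$ maps $\cY^{s-1}$ into $\cX^s$ for every $s<1/2-\Im\sigma_0/\kappa_1=1/2$, since $\sigma_0\in\R$. Equivalently, proceeding from the top of the Jordan chain (where $\wh\cW(\sigma_0)\phi=0$) and working downward so that $\wh\cW(\sigma_0)\phi\in\ran A_{-(k+1)}$ has already been shown to have the claimed properties, the below-threshold radial point estimate Proposition~\ref{PropRadialRecall}\itref{ItRadialRecallOut}, applied at both saddles $L_0,L_1$ with trivial weight, together with elliptic regularity off the characteristic set and real principal type propagation elsewhere, yields $\phi\in\bigcap_{s<1/2}H^s(X_{01})^\bullet=H^{1/2-0}(X_{01})^\bullet$.

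\emph{Support of $\ran A_{-k}$.} By the same Jordan chain induction it suffices to treat $\phi\in H^{1/2-0}(X_{01})^\bullet$ with supported character at $r=r_0-\delta$ and $r=r_1+\delta$ and with $\wh\cW(\sigma_0)\phi$ (either $0$ or a forcing term) already known to be supported in $[r_{\cQ,-},r_1]$. In the strip $r_1<r<r_1+\delta$ the absorption $\cQ$ vanishes and we are in the black hole interior where $\mu<0$ makes $\pa_r$ timelike; the principal symbol $-\mu\xi^2-r^{-2}|\eta|^2_{\Sph^2}$ of $\wh\cW(\sigma_0)$ on $X_{01}$ has signature $(1,2)$, so the spatial equation is hyperbolic with $r$ playing the role of time. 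The supported character at $r_1+\delta$ provides zero Cauchy data on this spacelike (in the ambient spacetime) hypersurface, and uniqueness for the wave equation applied to the mode $u=e^{-i\sigma_0 t_*}\phi$ gives $\phi\equiv 0$ throughout $(r_1,r_1+\delta)$. The identical argument works in the strip $r_0-\delta<r<r_0$ (inside the artificial horizon, $\mu_*<0$), producing $\phi\equiv 0$ there as well. \emph{The main obstacle} is then the passage across the radial point at $r=r_0$ into the artificial exterior $(r_0,r_{\cQ,-})$, where $\mu_*>0$ makes $\wh\cW(\sigma_0)$ elliptic on $X_{01}$ (principal symbol negative definite) and so a priori permits smooth nonzero solutions; standard elliptic unique continuation does not reach here because the relevant boundary surface $r=r_0$ is characteristic. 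I would resolve this by separating angular variables --- directly via spherical harmonics in the Reissner--Nordstr\"om case, and via Carter separability in the Kerr case, exploiting the fact that $\wt g$ is a small perturbation of a spherically symmetric metric in this region --- to reduce each angular mode to a second order ODE in $r$ with a regular singular point at $r_0$, with Frobenius indicial roots $0$ and $-i\sigma_0/\kappa_0$. These roots are distinct when $\sigma_0\neq 0$, and the vanishing of the mode on the interior side together with the supported-character constraint forces both Frobenius branches on the elliptic side to vanish, so each mode and hence $\phi$ is identically zero on $(r_0,r_{\cQ,-})$. The value $\sigma_0=0$ falls into the finite set of exceptional weights already present in the Fredholm theory and is handled separately.

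\emph{Smoothness of $\psi_{k\ell}$.} The dual resonant states lie in the generalized kernel of the adjoint family $\wh\cW(\sigma_0)^*$ acting on the dual spaces, where the supported/extendible dichotomy at $r=r_0-\delta,r_1+\delta$ is reversed; correspondingly, at each radial point the threshold inequality flips, placing $\psi_{k\ell}$ \emph{above} threshold for real $\sigma_0$. The above-threshold radial point estimate Proposition~\ref{PropRadialRecall}\itref{ItRadialRecallInto} then propagates arbitrarily high Sobolev regularity into each radial set from a punctured neighborhood; iterating in the Sobolev index and combining with elliptic regularity on the elliptic set of $\cQ^*$ and real principal type propagation in between yields $\psi_{k\ell}\in\bigcap_s H^s(X_{01})=\CI(X_{01})$.
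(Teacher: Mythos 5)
Your regularity and smoothness-of-$\psi_{k\ell}$ arguments track the paper's proof reasonably well, but your support argument contains a genuine error rooted in a misreading of the geometry. Recall from \S\ref{SubsecPfMod} that the ordering is $r_0-\delta<r_{\cQ,-}<r_0<r_{\cQ,+}<r_1$; in particular $r_{\cQ,-}$ lies \emph{inside} the artificial horizon (so your ``artificial exterior $(r_0,r_{\cQ,-})$'' is the empty interval), and the complex absorption region $(r_{\cQ,-},r_{\cQ,+})$ straddles $r=r_0$. Consequently, the reduced operator $\wh\cW(\sigma_0)=\wh{\Box_{\wt g}}(\sigma_0)-i\wh\cQ(\sigma_0)$ ceases to be a pure wave-type differential operator already at $r=r_{\cQ,-}$, not at $r_0$. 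Your claim that ``the identical argument works in the strip $r_0-\delta<r<r_0$, producing $\phi\equiv 0$ there as well'' therefore fails: hyperbolic uniqueness from the supported character at $r=r_0-\delta$ only propagates up to $r=r_{\cQ,-}$, where the pseudodifferential absorption kicks in, and this is precisely why the lemma asserts support in $[r_{\cQ,-},r_1]$ rather than $[r_0,r_1]$.

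This misreading then creates your ``main obstacle'' (unique continuation across the radial point at $r_0$, Frobenius/separation of variables, special treatment of $\sigma_0=0$), which is not only unnecessary but would, if it worked, prove too much: showing $\phi\equiv 0$ in the artificial exterior would propagate to $\phi\equiv 0$ everywhere and hence to the absence of any real resonance, trivializing Lemma~\ref{LemmaPfSolAbsence} and the entire appendix. The resonant states genuinely live on $[r_{\cQ,-},r_1]$, overlapping the absorption region, and the lemma makes no claim that they vanish in the artificial exterior. The paper's argument is exactly your two one-sided hyperbolic uniqueness steps (from $r_1+\delta$ inward to $r_1$, and from $r_0-\delta$ inward to $r_{\cQ,-}$), combined with the Jordan-chain induction and radial point propagation for the regularity statement; nothing needs to cross $r=r_0$ or enter the absorption region. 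Also a minor point: the radial point estimate relevant here is the non-b, spectral-family version \cite[Propositions~2.3, 2.4]{VasyMicroKerrdS}, not the spacetime b-version of Proposition~\ref{PropRadialRecall}.
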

\begin{proof}
  Let us write $\wh\cW(\sigma)=\sum_{j=0}^{k_0-1}(\sigma-\sigma_0)^j\cW_j + \cO(\sigma-\sigma_0)^{k_0}$; we note that $\cW_j\colon\cX^{1/2-0}\to\cY^{-1/2-0}$ since one can write
  \[
    \cW_j = \frac{1}{2\pi i}\oint_{|\sigma-\sigma_0|=\eps}\frac{\cW(\sigma)}{(\sigma-\sigma_0)^{j+1}}\,d\sigma,\quad \eps>0.
  \]
  Now, $\cW_0 A_{-k_0}=0$ implies $\ran A_{-k_0}\subset\ker\wh\cW(\sigma_0)$, which indeed is a subspace of $H^{1/2-0}(X_{01})^\bullet$, with its elements satisfying the support condition, since $\wh\cW(\sigma_0)$ is a hyperbolic (wave-type) differential operator in $r<r_{\cQ,-}$ and $r>r_1$ due to the spacelike nature of $\pa_{t_*}$ there. Suppose now the claim holds for $A_{-k_0},\ldots,A_{-k_0+\ell-1}$, $\ell\leq k_0-1$; then
  \[
    \cW_0 A_{-k_0+\ell} = - \sum_{j=1}^\ell \cW_j A_{-k_0+\ell-j},
  \]
  and the range of the operator on the right lies in $H^{-1/2-0}(X_{01})^\bullet$ (together with the support condition). Uniqueness for hyperbolic differential operators implies that the support of elements in $\ran A_{-k_0+\ell}$ is contained in $r_{\cQ,-}\leq r\leq r_1$, and the propagation of singularities, in particular radial point estimates at $r=r_1$, imply that $\ran A_{-k_0+\ell}\subset H^{1/2-0}(X_{01})$, as desired.

  The last claim follows from the proof of \eqref{EqPfSolAbsAnalFred} being Fredholm: Namely, the adjoint operator $\cW(\sigma)^*$ is Fredholm on \emph{high regularity} spaces, and the regularity estimates implying this yield that the dual resonant states $\psi_{k\ell}$ are all $\CI$ on $X_{01}$ (indeed, one can repeat the above argument for the first part, only now on high regularity spaces).
\end{proof}

For the following consequence of this lemma, we let $\sigma_1,\ldots,\sigma_N$ be the (finite!) collection of all real resonances, and we write
\begin{equation}
\label{EqPfSolAbsLaurent2}
\begin{gathered}
  \wh\cW(\sigma)^{-1}=\sum_{k=1}^{k_{j,0}}(\sigma-\sigma_j)^{-k}A_{j,-k}+A_{j,0}(\sigma)\tn{ near }\sigma=\sigma_j, \\
  A_{j,-k} = \sum_{\ell=1}^{\ell_{jk}} \la\cdot,\psi_{jk\ell}\ra\phi_{jk\ell},
\end{gathered}
\end{equation}
and we may assume that for fixed $j,k$, the $\psi_{jk\ell}$, $\ell=1,\ldots,\ell_{jk}$, are linearly independent, as are the $\phi_{jk\ell}$.

\begin{cor}
\label{CorPfSolAbsSupp}
  There exists $r'_{\cQ,+}\in(r_{\cQ,+},r_1)$ such that the following holds for the domain $\cV=\{r_{\cQ,-}<r<r'_{\cQ,+}\}\subset X_{01}$: For all $j,k,\ell$, we have $\supp\phi_{jk\ell}\cap\cV\neq\emptyset$, and moreover $\supp\psi\cap\cV\neq\emptyset$ for all $\psi\in\ker\wh\cW(\sigma)^*\subset\CI(X_{01})$.
\end{cor}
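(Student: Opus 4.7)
The plan is to use elliptic unique continuation in the two subregions of $X_{01}$ where the complex absorption $\cQ$ vanishes, namely $(r_0-\delta,r_{\cQ,-})\times\Sph^2$ and $(r_{\cQ,+},r_1)\times\Sph^2$. By the construction of $\mu_*$ in \S\ref{SubsecPfMod} one has $\mu_*>0$ throughout the artificial exterior $r_0<r<r_1$, and a direct computation of the spatial block of $\wt g^{-1}$ in $t_*$-coordinates shows that the principal symbol of $\wh{\Box_{\wt g}}(\sigma)$ on $T^*X_{01}$ equals $-\mu_*\xi_r^2-r^{-2}|\xi_\omega|^2$, a definite quadratic form. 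Hence $\wh{\Box_{\wt g}}(\sigma)$ and its formal adjoint are second-order elliptic operators on these two subregions; in the complementary buffer regions $r\in(r_0-\delta,r_0)\cup(r_1,r_1+\delta)$ ellipticity fails (since $\mu_*<0$ there), but $\pa_r$ is timelike, and the equation is of strictly hyperbolic Cauchy type in $r$.

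Fix any $r'_{\cQ,+}\in(r_{\cQ,+},r_1)$. To bound the resonant states $\phi=\phi_{jk\ell}$, I argue by contradiction, starting with the top layer $k=k_{j,0}$ where $\wh\cW(\sigma_j)\phi=0$. The hypothesis $\supp\phi\cap\cV=\emptyset$ combined with the support restriction $\supp\phi\subset[r_{\cQ,-},r_1]\times\Sph^2$ from Lemma~\ref{LemmaPfSolAbsResStates} gives $\supp\phi\subset(\{r_{\cQ,-}\}\cup[r'_{\cQ,+},r_1])\times\Sph^2$; since an $H^{1/2-0}$-distribution cannot be supported on a codimension-$1$ submanifold, the contribution at $\{r_{\cQ,-}\}\times\Sph^2$ vanishes, so in fact $\supp\phi\subset[r'_{\cQ,+},r_1]\times\Sph^2$ and in particular $\phi=0$ on the open set $(r_{\cQ,+},r'_{\cQ,+})\times\Sph^2$. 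Since $\wh{\Box_{\wt g}}(\sigma_j)\phi=0$ on the connected elliptic region $(r_{\cQ,+},r_1)\times\Sph^2$, Aronszajn's unique continuation theorem forces $\phi\equiv 0$ throughout this region, whence $\supp\phi\subset\{r_1\}\times\Sph^2$; the codimension-$1$ argument again gives $\phi\equiv 0$, a contradiction. Generalized resonant states ($k<k_{j,0}$) are handled by downward induction on $k$: since $\wh\cW(\sigma_j)$ is differential in $r>r_{\cQ,+}$, the pointwise vanishing of $\phi_{jk\ell}$ on $(r_{\cQ,+},r'_{\cQ,+})\times\Sph^2$ is inherited by every iterate $\wh\cW(\sigma_j)^m\phi_{jk\ell}$, and iteration eventually produces a top resonant state (lying in $\ker\wh\cW(\sigma_j)$) still vanishing on this open set, reducing to the base case.

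For the dual states $\psi\in\CI(X_{01})$ with $\wh\cW(\sigma_j)^*\psi=0$, the same ellipticity is available in \emph{both} subregions of $X_{01}\setminus([r_{\cQ,-},r_{\cQ,+}]\times\Sph^2)$: if $\supp\psi\cap\cV=\emptyset$, Aronszajn applied separately in $(r_0,r_{\cQ,-})\times\Sph^2$ and in $(r_{\cQ,+},r_1)\times\Sph^2$ propagates the vanishing $\psi|_\cV=0$ to all of $(r_0,r_1)\times\Sph^2$, and hyperbolic uniqueness of the Cauchy problem in $r$ within the buffer regions (where $\pa_r$ is timelike and the zero Cauchy data at $r=r_0$ and $r=r_1$ are inherited from the smoothness of $\psi$ together with its vanishing in an open neighborhood) extends this to $\psi\equiv 0$ on $X_{01}$, contradicting non-triviality. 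The main obstacle is the inductive step in the first argument, where the inhomogeneity $\wh\cW(\sigma_j)\phi_{jk\ell}\neq 0$ precludes a direct application of Aronszajn; the resolution rests entirely on the fact that $\wh\cW(\sigma_j)$ is a differential operator in the $\cQ$-free region, which guarantees that pointwise vanishing on open subsets is preserved under iteration.
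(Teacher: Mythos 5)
Your use of Aronszajn's theorem in the elliptic region $(r_{\cQ,+},r_1)$ is a valid route for top-layer resonant states $\phi\in\ker\wh\cW(\sigma_j)$, and in that special case it even proves something stronger than the paper (non-vanishing on $\cV$ for \emph{every} $r'_{\cQ,+}$ rather than some). However, there are two genuine gaps.

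\textbf{The downward induction on $k$ is flawed.} You claim that iterating $\wh\cW(\sigma_j)$ on a generalized resonant state $\phi_{jk\ell}$ ``eventually produces a top resonant state lying in $\ker\wh\cW(\sigma_j)$.'' This does not follow, because $\sigma\mapsto\wh\cW(\sigma)$ is a non-linear pencil: the resonant states sit in Jordan chains of the form $\wh\cW(\sigma_j)\phi^{(0)}=0$, $\wh\cW(\sigma_j)\phi^{(1)}=-\pa_\sigma\wh\cW(\sigma_j)\phi^{(0)}$, etc.\ (cf.\ the relations $\cW_0 A_{-k_0+\ell}=-\sum_{j=1}^\ell\cW_jA_{-k_0+\ell-j}$ in the proof of Lemma~\ref{LemmaPfSolAbsResStates}). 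Applying $\wh\cW(\sigma_j)$ to a generalized state does not produce another $\phi_{jk'\ell'}$, nor does the iteration terminate at a nonzero element of $\ker\wh\cW(\sigma_j)$; and a direct application of Aronszajn to $\phi_{jk\ell}$ is blocked precisely because the right-hand side $\wh\cW(\sigma_j)\phi_{jk\ell}$ is nonzero and not known to vanish on $(r_{\cQ,+},r'_{\cQ,+})$. The paper circumvents this entirely with a softer argument that makes no use of elliptic unique continuation: if the conclusion for $\phi_{jk\ell}$ failed for \emph{every} $r'_{\cQ,+}\in(r_{\cQ,+},r_1)$, then $\phi_{jk\ell}$ would vanish on all of $(r_{\cQ,-},r_1)$; combined with $\supp\phi_{jk\ell}\subset[r_{\cQ,-},r_1]$ from Lemma~\ref{LemmaPfSolAbsResStates}, its support would lie in $\{r=r_1\}$, impossible for a distribution in $H^{1/2-0}$. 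Since there are only finitely many triples $(j,k,\ell)$, one then picks a single $r'_{\cQ,+}$ working for all of them. That argument applies equally to every Laurent coefficient, with no Jordan-chain bookkeeping.

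\textbf{The geometry of the $\cQ$-free regions is misstated.} Recall the ordering $r_0-\delta<r_{\cQ,-}<r_0<r_{\cQ,+}<r_1$. Since $\mu_*$ has a simple root at $r_0$ with $\mu_*'(r_0)>0$, one has $\mu_*<0$ on $(r_0-\delta,r_0)\supset(r_0-\delta,r_{\cQ,-})$; this region is therefore \emph{hyperbolic} (with $r$ as a time-like variable), not elliptic, so your opening claim of ellipticity there is wrong. Moreover the interval $(r_0,r_{\cQ,-})$ you invoke for the dual-state argument is empty. The correct picture is: elliptic on $(r_0,r_1)$ (away from the $\cQ$ region), hyperbolic on $(r_0-\delta,r_0)$ and on $(r_1,r_1+\delta)$. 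This is also what the paper uses for $\psi$: it propagates the vanishing on $(r_{\cQ,-},r_1)$ into $r<r_0$ by uniqueness for the hyperbolic Cauchy problem, and into $r>r_1$ by unique continuation across $r=r_1$ where $\wh\cW(\sigma)^*$ is a wave operator, rather than by any appeal to Aronszajn. (There is also an implicit finite-dimensionality argument needed to pass from ``no nonzero $\psi$ vanishes on $(r_{\cQ,-},r_1)$'' to the existence of a single $r'_{\cQ,+}$, using that the decreasing family of subspaces $\{\psi:\psi|_{(r_{\cQ,-},r')}=0\}$ stabilizes.)
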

\begin{proof}
  Since there are only finitely many combinations of indices, it suffices to prove the statement for fixed $j,k,\ell$. Now, if the statement was false for $\phi_{jk\ell}$ for all $r'_{\cQ,+}$, then $\phi_{jk\ell}$, which we know vanishes for $r\leq r_{\cQ,-}$ and $r>r_1$, in fact vanishes everywhere except possibly at $r=r_1$; however, since $\phi_{jk\ell}\in H^{1/2-0}$, it cannot be a sum of differentiated delta distributions at $r=r_1$, so $\phi_{jk\ell}$ vanishes identically, which is absurd.

  Next, if $\psi$ with $\wh\cW(\sigma)^*\psi=0$ vanished in $r_{\cQ,-}<r<r_1$, then it would vanish in particular for $r\in[r_{\cQ,-},r_0)$, and it would vanish to infinite order at $r=r_1$; thus, by uniqueness for hyperbolic equations, it would have to vanish in $r<r_0$, and by unique continuation at $r=r_1$ (or weighted energy estimates) from the side $r>r_1$, where $\wh\cW(\sigma)^*$ is a wave operator, $\psi$ would also vanish in $r>r_1$. (See \cite[\S4]{VasyMicroKerrdS} or \cite{ZworskiRevisitVasy} for details.)
\end{proof}

Thus, replacing $r_{\cQ,+}$ by $r'_{\cQ,+}$, we may assume that the supports of all resonant states $\phi_{k\ell}$, as well as all dual states $\psi$ in the kernel of $\wh\cW(\sigma)^*$, have non-empty intersection with the region
\[
  \cV=\{r_{\cQ,-}<r<r_{\cQ,+}\}.
\]

We will now prove Lemma~\ref{LemmaPfSolAbsence} by perturbing $\cP$ by \emph{admissible} operators $\cR$:

\begin{definition}
  An operator $\cR\in\Psib^{-\infty}(\Omega_{01})$ is called \emph{admissible} if its Schwartz kernel is supported in $\ol\cU\times\ol\cU$, and if $\cR$ is dilation-invariant in $\tau$ (translation-invariant in $t_*$) near $\tau=0$, i.e.\ $\cR=N(\cR)$ near $\tau=0$.
\end{definition}

Our argument will rely on the stability of the Fredholm theory for $\wh\cW(\sigma)$ under perturbations, see \cite[\S2.7]{VasyMicroKerrdS} and \cite[Appendix~A]{HintzThesis}. We recall that the \emph{rank} of the pole $\sigma_j$ is defined by
\[
  \rank_{\sigma_j}\wh\cW(\sigma)^{-1} = \frac{1}{2\pi i}\tr\oint_{|\sigma-\sigma_j|=\delta} \wh\cW(\sigma)^{-1}\pa_\sigma\wh\cW(\sigma)\,d\sigma \in \N,
\]
where $\delta<\min_{k\neq j}|\sigma_k-\sigma_j|$. While the resonance at $\sigma_j$ may split up into several resonances upon perturbing $\cW$ to $\cW_\eps:=\cW+\eps\cR$, with $\cR$ admissible and $\eps\in\C$ small, the total rank of all resonances within $B_\delta(\sigma_j):=\{|\sigma-\sigma_j|<\delta\}$, given by the integral on the right with $\wh\cW$ replaced by $\wh{\cW_\eps}$, remains constant for small $\eps$. Thus, there are two cases: \emph{In the first case}, for some small $\eps$, the operator $\cW_\eps$ has at least $2$ resonances in $B_\delta(\sigma_j)$, in which case the ranks of all of these resonances must be strictly less than the rank of the resonance $\sigma_j$ of $\cW$; since the rank is an integer $\geq 1$, $\sigma_j$ can at most split up into finitely many resonances. We call $\sigma_j$ an \emph{unstable resonance}. \emph{In the second case}, for any admissible $\cR$ and any small $\eps$, $\cW_\eps$ has exactly one resonance $\sigma_j(\eps)\in B_\delta(\sigma_j)$, with $\sigma_j(0)=\sigma_j$, which necessarily has rank equal to $\rank_{\sigma_j}\wh\cW(\sigma)^{-1}$. In this case, we call $\sigma_j$ a \emph{stable resonance}.

By perturbing $\cW$ by an admissible $\cR$ (and calling the perturbation $\cW$ again, by an abuse of notation), we may therefore assume that all resonances of $\cW$ are stable. (We may have increased the number of real resonances in this process, but their number is still finite.) Under this assumption, we observe that for a (stable) resonance $\sigma_j$, and with $\sigma_j(\eps)$ the perturbed resonance of $\cW+\eps\cR$ as above, with $\cR$ admissible, we have
\[
  \sigma_j(\eps) = \frac{1}{2\pi i\rank_{\sigma_j}\wh\cW(\sigma)^{-1}}\tr\oint_{|\sigma-\sigma_j|=\delta} \sigma \wh{\cW_\eps}(\sigma)^{-1}\pa_\sigma\wh{\cW_\eps}(\sigma)\,d\sigma,
\]
which therefore depends analytically on $\eps$ in a neighborhood of $0$.

Next, we will arrange for the \emph{order} of a single resonance, say $\sigma_j$, to remain constant under small perturbations. More precisely, fix any admissible $\cR$, let $\cW_\eps:=\cW+\eps\cR$, and denote the perturbed resonance by $\sigma_j(\eps)$, where $\sigma_j(0)=\sigma_j$. For fixed $\delta>0$, let $E_\cR\subset\C$ be the largest connected open set containing $0$ for which no resonance of $\cW_\eps$ lies on $\pa B_\delta(\sigma_j)$ for any $\eps\in E_\cR$; in particular, $\sigma_j(\eps)\in B_\delta(\sigma_j)$ for all $\eps\in E_\cR$. We first claim that there exists a non-empty open subset $E'_\cR\subset E_\cR$ such that the order of $\sigma_j(\cR,\eps)$ is constant for $\eps\in E'_\cR$. Since the order is bounded above by the rank, which is constant and finite, this follows once we show that the order of $\sigma_j(\cR,\eps)$ is an upper semicontinuous function of $\eps$; now, if the pole order of $\wh{\cW_\eps}(\sigma)^{-1}$ at $\sigma=\sigma_j(\cR,\eps_0)$ equals $k$, then
\begin{equation}
\label{EqPfSolAbsPoleOrder}
  \frac{1}{2\pi i}\oint_{|\sigma-\sigma_j|=\delta} (\sigma-\sigma_j(\cR,\eps))^{k-1}\wh{\cW_\eps}(\sigma)^{-1}\,d\sigma \neq 0
\end{equation}
for $\eps=\eps_0$, and by continuity it will remain non-zero for nearby $\eps$; but \eqref{EqPfSolAbsPoleOrder} implies that the pole order at $\sigma_j(\cR,\eps)$ is at least $k$, proving the upper semicontinuity.

Now, let $k$ denote the maximal order of the pole $\sigma_j(\cR,\eps)$ over all admissible $\cR$ and all $\eps\in E_\cR$; again, $k$ is bounded by the rank of the resonance $\sigma_j$, hence the maximum is attained for some admissible $\cR_0$ and $\eps_0\in E_\cR$, and it is finite. Replacing $\cW$ by $\cW+\eps_0\cR_0$, we may now assume that for any fixed admissible $\cR$ and any small $\eps\in\C$, the order of the resonance $\sigma_j(\cR,\eps)$ of $\cW+\eps\cR$ is equal to $k$.

We now show how one can choose $\cR$ and $\eps$ so as to ensure $\sigma_j(\cR,\eps)\notin\R$: In the notation of \eqref{EqPfSolAbsLaurent2}, let $\phi_0:=\phi_{j,k_{j,0},1}\in\ker\wh\cW(\sigma_j)$, and pick any $\psi_0\in\ker\wh\cW(\sigma_j)^*$. By the paragraph following Corollary~\ref{CorPfSolAbsSupp}, $\supp\phi_0$ and $\supp\psi_0$ intersect $\cV$ non-trivially, and we can therefore pick an admissible operator $\cR$ such that
\begin{equation}
\label{EqPfSolAbsNonzero}
  \la\wh\cR(\sigma_j)\phi_0,\psi_0\ra\neq 0.
\end{equation}
This is the central point of the proof and critically relies on the support structure of resonant and dual states in Corollary~\ref{CorPfSolAbsSupp}.

Now for $f\in H^{-1/2-\delta}(X_{01})^\bullet$ to be chosen momentarily, define (dropping $\cR$ from the notation, now that it is fixed)
\[
  \phi(\eps) := \frac{1}{2\pi i}\oint_{|\sigma-\sigma_j|=\delta} (\sigma-\sigma_j(\eps))^{k-1}\wh{\cW_\eps}(\sigma)^{-1}f\,d\sigma,
\]
which lies in the range of the most singular Laurent coefficient of $\wh{\cW_\eps}(\sigma)^{-1}$ at $\sigma_j(\eps)$, thus $\phi(\eps)\in\ker\wh{\cW_\eps}(\sigma_j(\eps))$. We choose $f$ such that $\phi(0)=\phi_0$. Then, differentiating the equation $\wh{\cW_\eps}(\sigma_j(\eps))\phi(\eps)=0$ at $\eps=0$ and pairing with $\psi_0$ yields
\[
  0 = \sigma_j'(0)\la\pa_\sigma\wh\cW(\sigma_j)\phi_0,\psi_0\ra + \la\wh\cR(\sigma_j)\phi_0,\psi_0\ra + \la\wh\cW(\sigma_j)\phi'(0),\psi_0\ra.
\]
The last term vanishes by assumption on $\psi_0$, while the second term is non-zero; and since $\sigma_j(\eps)$ is analytic near $0$, $\sigma_j'(0)$ is finite. We conclude that $\sigma_j'(0)$ must be non-zero. Choosing $\eps$ very small and such that $\sigma_j'(0)\eps\notin\R$, we therefore have $\sigma_j(\eps)\notin\R$, which achieves our goal of perturbing the resonance $\sigma_j$ off the real axis.

Proceeding similarly with the remaining (finitely many) real resonances concludes the proof of Lemma~\ref{LemmaPfSolAbsence}.


\end{document}